\documentclass[12pt]{amsart}

\usepackage{amsmath,amssymb,amsfonts,amsxtra,hyperref,fullpage,xypic,centernot,setspace}
\usepackage{soul}
\setuldepth{zarikian}

\nocite{*}

\theoremstyle{plain}
\newtheorem*{theorem*}{Theorem}
\newtheorem{theorem}{Theorem}[subsection]
\newtheorem{lemma}[theorem]{Lemma}

\newtheorem{corollary}[theorem]{Corollary}
\newtheorem{question}[theorem]{Question}
\newtheorem{example}[theorem]{Example}
\theoremstyle{definition}
\newtheorem{remark}[theorem]{Remark}

\DeclareMathOperator{\bbC}{\mathbb{C}}
\DeclareMathOperator{\bbE}{\mathbb{E}}
\DeclareMathOperator{\bbF}{\mathbb{F}}
\DeclareMathOperator{\bbK}{\mathbb{K}}
\DeclareMathOperator{\bbM}{\mathbb{M}}
\DeclareMathOperator{\bbN}{\mathbb{N}}

\DeclareMathOperator{\bbR}{\mathbb{R}}

\DeclareMathOperator{\bbZ}{\mathbb{Z}}
\DeclareMathOperator{\A}{\mathcal{A}}
\DeclareMathOperator{\Ad}{\operatorname{Ad}}
\DeclareMathOperator{\Aut}{\operatorname{Aut}}
\DeclareMathOperator{\B}{\mathcal{B}}
\DeclareMathOperator{\C}{\mathcal{C}}
\DeclareMathOperator{\D}{\mathcal{D}}

\DeclareMathOperator{\ess}{\operatorname{ess}}

\renewcommand{\H}{\mathcal{H}}

\DeclareMathOperator{\id}{\operatorname{id}}
\DeclareMathOperator{\J}{\mathcal{J}}

\DeclareMathOperator{\LIM}{\operatorname{LIM}}
\DeclareMathOperator{\M}{\mathcal{M}}

\let\slasho=\o
\renewcommand{\o}{\overline}

\DeclareMathOperator{\Proj}{\operatorname{Proj}}

\renewcommand{\span}{\operatorname{span}}
\DeclareMathOperator{\U}{\mathcal{U}}

\begin{document}

\nocite{*}

\title{Unique Pseudo-Expectations for\\ Hereditarily Essential $C^*$-Inclusions}
\author{Vrej Zarikian} \thanks{The author was partially supported by a AMS-Simons Research Enhancement Grant for PUI (Primarily Undergraduate Institution) Faculty.}
\address{U. S. Naval Academy, Annapolis, MD 21402}
\email{zarikian@usna.edu}
\subjclass[2020]{Primary: 46L07 Secondary: 46L55}
\keywords{inclusions of $C^*$-algebras, pseudo-expectations, crossed products}
\dedicatory{Dedicated to the memory of Gary Weiss}
\date{\today}
\begin{abstract}
The $C^*$-inclusion $\A \subseteq \B$ is said to be hereditarily essential if for every intermediate $C^*$-algebra $\A \subseteq \C \subseteq \B$ and every non-zero ideal $\{0\} \neq \J \lhd \C$, we have that $\J \cap \A \neq \{0\}$. That is, $\A$ detects ideals in every intermediate $C^*$-algebra $\A \subseteq \C \subseteq \B$. By a result of Pitts and the author, the $C^*$-inclusion $\A \subseteq \B$ is hereditarily essential if and only if every pseudo-expectation $\theta:\B \to I(\A)$ for $\A \subseteq \B$ is faithful. A decade-old open question asks whether hereditarily essential $C^*$-inclusions must have \ul{unique} pseudo-expectations? In this note, we answer the question affirmatively for some important classes of $C^*$-inclusions, in particular those of the form $\A \rtimes_{\alpha,r}^\sigma N \subseteq \A \rtimes_{\alpha,r}^\sigma G$, for a discrete twisted $C^*$-dynamical system $(\A,G,\alpha,\sigma)$ and a normal subgroup $N \lhd G$. On the other hand, we settle the general question negatively by exhibiting $C^*$-irreducible inclusions of the form $C_r^*(G) \subseteq C(X) \rtimes_{\alpha,r} G$ with multiple conditional expectations. Our results leave open the possibility that the question has a positive answer for \ul{regular} hereditarily essential $C^*$-inclusions.
\end{abstract}
\maketitle

%\tableofcontents

\section{Introduction}

\subsection{$C^*$-Inclusions}

By a \emph{$C^*$-inclusion} we mean an inclusion $\A \subseteq \B$ of $C^*$-algebras. We say that the $C^*$-inclusion is \emph{unital} if $\A$ is unital and $1_{\A}$ is a unit for $\B$. We say that the $C^*$-inclusion is \emph{approximately unital} if there exists an approximate unit $\{e_\lambda\}$ for $\A$ which is also an approximate unit for $\B$. The study of ``largeness'' conditions for $C^*$-inclusions (i.e., properties of $\A \subseteq \B$ that indicate $\A$ is a ``large'' subalgebra of $\B$) has attracted considerable attention recently, and this work contributes to that program.\\

\subsection{Hereditarily Essential $C^*$-Inclusions} \label{hered}

We say that the $C^*$-inclusion $\A \subseteq \B$ is \emph{essential} if for every non-zero ideal $\{0\} \neq \J \lhd \B$, we have that $\J \cap \A \neq \{0\}$. In other words, $\A$ ``detects ideals'' in $\B$. Intuitively, if $\A \subseteq \B$ is essential, then $\A$ should be a ``large'' subalgebra of $\B$, because no non-trivial ideal of $\B$ can avoid it. Unfortunately, this intuition can fall apart when $\B$ has few ideals. Indeed, if $\B$ is a simple $C^*$-algebra, then $\A \subseteq \B$ is essential for every non-zero subalgebra $\A$, no matter how small. So a stronger condition is needed.\\

We say that the $C^*$-inclusion $\A \subseteq \B$ is \emph{hereditarily essential} if for every intermediate $C^*$-algebra $\A \subseteq \C \subseteq \B$, the $C^*$-inclusion $\A \subseteq \C$ is essential. In other words, $\A$ ``detects ideals'' in every intermediate $C^*$-algebra $\A \subseteq \C \subseteq \B$. This condition is studied explicitly in \cite{PittsZarikian2015} and \cite{KwasniewskiMeyer2022}. It is also studied implicitly in \cite{Rordam2023}, a paper devoted to the systematic investigation of \emph{$C^*$-irreducible inclusions}, unital $C^*$-inclusions $\A \subseteq \B$ for which every intermediate $C^*$-algebra $\A \subseteq \C \subseteq \B$ is simple. It is easy to see that the $C^*$-irreducible inclusions are precisely the unital hereditarily essential $C^*$-inclusions $\A \subseteq \B$ for which $\A$ is simple.

\subsection{Pseudo-Expectations} \label{PsExp}

A \emph{conditional expectation} for the $C^*$-inclusion $\A \subseteq \B$ is a c.c.p. (contractive completely positive) map $E:\B \to \A$ such that $E(a)=a$ for all $a \in \A$. By Choi's Lemma, $E$ is an $\A$-bimodule map, meaning that $E(a_1ba_2)=a_1E(b)a_2$ for all $b \in \B$ and $a_1, a_2 \in \A$ (see \cite[Section 1.3.12]{BlecherLeMerdy2004} for the unital case).\\
%\cite[Theorem 4.4]{Skoufranis2014}

In order to deal with the fact that many $C^*$-inclusions have no conditional expectations at all, Pitts introduced \emph{pseudo-expectations} \cite[Definition 1.3]{Pitts2017}. A pseudo-expectation for the $C^*$-inclusion $\A \subseteq \B$ is a c.c.p. map $\theta:\B \to I(\A)$ such that $\theta(a)=a$ for all $a \in \A$. Here $I(\A)$ is the \emph{injective envelope} of $\A$, i.e., the smallest injective operator space containing $\A$ \cite{Hamana1979}. (In fact, $I(\A)$ is a unital $C^*$-algebra containing $\A$ as a $C^*$-subalgebra.) Pseudo-expectations generalize conditional expectations, but always exist because of injectivity. If $\theta:\B \to I(\A)$ is a pseudo-expectation for $\A \subseteq \B$, then (just like conditional expectations) $\theta$ is an $\A$-bimodule map. A pseudo-expectation is said to be \emph{faithful} if $\theta(b^*b)=0$ implies $b=0$.\\

An intimate connection between pseudo-expectations and hereditarily essential $C^*$-inclusions was established in \cite[Theorem 3.5]{PittsZarikian2015}, at least for unital $C^*$-inclusions. We state the result for arbitrary $C^*$-inclusions, providing the nearly-identical proof for the reader's convenience.

\begin{theorem}[\cite{PittsZarikian2015}] \label{equivalence}
The $C^*$-inclusion $\A \subseteq \B$ is hereditarily essential if and only if every pseudo-expectation is faithful.
\end{theorem}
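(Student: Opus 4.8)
The plan is to prove both implications by contraposition, in each case producing or detecting a non-faithful pseudo-expectation by exploiting an ideal that misses $\A$. The two constructions are dual: for the ``easy'' direction I would manufacture a non-faithful pseudo-expectation out of a bad ideal using the injectivity of $I(\A)$, while for the ``hard'' direction I would extract a bad ideal out of a non-faithful pseudo-expectation using its left kernel together with the positivity and bimodule properties of $\theta$.

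First I would establish ``faithful $\Rightarrow$ hereditarily essential'' by contraposition. Suppose $\A \subseteq \B$ is not hereditarily essential, so there is an intermediate algebra $\A \subseteq \C \subseteq \B$ and a non-zero ideal $\J \lhd \C$ with $\J \cap \A = \{0\}$. Let $q \colon \C \to \C/\J$ be the quotient map. Since $\J \cap \A = \{0\}$, the restriction $q|_\A$ is an injective $*$-homomorphism, identifying $\A$ with a subalgebra $q(\A) \subseteq \C/\J$. Because the injective envelope is an isomorphism invariant, $I(q(\A)) = I(\A)$, and injectivity lets me extend the inclusion $q(\A) \hookrightarrow I(\A)$ to a c.c.p. map $\psi \colon \C/\J \to I(\A)$ fixing $\A$. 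Then $\psi \circ q \colon \C \to I(\A)$ is a pseudo-expectation for $\A \subseteq \C$ that annihilates $\J$; extending it once more along $\C \subseteq \B$ by injectivity yields a pseudo-expectation $\theta \colon \B \to I(\A)$. For any non-zero $j \in \J$ we have $j^*j \in \J$ and $\theta(j^*j) = 0$ while $j \neq 0$, so $\theta$ is not faithful.

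Next I would prove ``hereditarily essential $\Rightarrow$ faithful,'' again by contraposition. Suppose $\theta \colon \B \to I(\A)$ is a pseudo-expectation that is not faithful, and set $L = \{b \in \B : \theta(b^*b) = 0\}$. Using positivity of $\theta$, the operator inequality $b^*c^*cb \le \|c\|^2 b^*b$, and the Cauchy--Schwarz inequality for c.c.p. maps, I would check that $L$ is a closed left ideal of $\B$; since $\theta|_\A = \id$ is faithful on $\A$, one has $L \cap \A = \{0\}$. I would then consider $\J = L \cap L^*$, a non-zero closed $*$-subalgebra: non-zero because $b^*b \in \J$ whenever $\theta(b^*b) = 0$ with $b \neq 0$, and multiplicatively closed by the same conjugation estimates applied to both $L$ and $L^*$. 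Finally, the $\A$-bimodule property of $\theta$ (e.g.\ $\theta(a j j^* a^*) = a\,\theta(jj^*)\,a^* = 0$) together with the conjugation estimates gives $\A\J \subseteq \J$ and $\J\A \subseteq \J$, so $\J$ is a two-sided ideal of the intermediate $C^*$-algebra $\C = \o{\A + \J}$. As $\J \neq \{0\}$ and $\J \cap \A = \{0\}$, this contradicts hereditary essentiality.

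The routine but delicate heart of the argument --- and the step I expect to be the main obstacle --- is the second direction: verifying that $\J = L \cap L^*$ is simultaneously a $C^*$-algebra, an ideal of $\C = \o{\A + \J}$, and disjoint from $\A$. Each closure and absorption claim rests on the interplay between positivity, the conjugation estimates, and the bimodule property, and one must be careful that the relevant products genuinely land back in $L$ or $L^*$ rather than merely in $\B$. By contrast, the first direction is comparatively soft, its only essential inputs being the defining extension property of $I(\A)$ and the invariance of the injective envelope under the isomorphism $q|_\A$.
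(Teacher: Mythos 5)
Your proposal is correct, but one of its two halves takes a genuinely different route from the paper. Your first direction (faithful $\Rightarrow$ hereditarily essential, by contraposition) is essentially the paper's argument for that implication: the paper defines $\pi:\A+\J \to \A: a+j \mapsto a$ (well defined precisely because $\J \cap \A = \{0\}$) and extends it by injectivity of $I(\A)$ to a pseudo-expectation annihilating $\J$; your route through the quotient $\C/\J$ and the c.c.p.\ extension of $(q|_{\A})^{-1}$ is the same construction in different clothing. Your second direction, however, differs structurally from the paper's. Given $b \neq 0$ with $\theta(b^*b)=0$, the paper works locally: it sets $\C = C^*(\A,|b|)$, takes $\J \lhd \C$ to be the ideal generated by $|b|$, and uses the Schwarz/conjugation estimates together with the $\A$-bimodule property to show $\theta$ annihilates $\J$, whence $\J \cap \A = \{0\}$ and essentiality of $\A \subseteq \C$ forces $b = 0$ --- a direct proof of faithfulness rather than a contradiction. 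You instead work globally with the left kernel $L$, its hereditary piece $\J = L \cap L^*$, and $\C = \o{\A + \J}$. Your closure and absorption claims do all check out with exactly the toolkit you name: $aj \in L$ since $\theta(j^*a^*aj) \leq \|a\|^2\theta(j^*j) = 0$; $aj \in L^*$ since $\theta(ajj^*a^*) = a\theta(jj^*)a^* = 0$ by the bimodule property (using $j^* \in L$); products $jk$ land in $\J$ by the same estimates; and $0 \neq b^*b \in \J$ because $b^*bb^*b \leq \|b\|^2 b^*b$, so $\J \neq \{0\}$ while $\J \cap \A = \{0\}$. Both arguments are sound; the paper's is leaner (no need to verify that $\J$ is an algebra, that $\A+\J$ is a $*$-subalgebra, or that the ideal property survives passage to the closure), while yours produces a canonical ideal attached to $\theta$ itself rather than to a single null element, which makes visible that any failure of faithfulness is witnessed by one hereditary obstruction sitting inside an intermediate algebra.
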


\begin{proof}
($\Rightarrow$) Suppose $\A \subseteq \B$ is hereditarily essential. Let $\theta:\B \to I(\A)$ be a c.c.p. map such that $\theta|_{\A}=\id$. Assume that $\theta(b^*b)=0$, where $b \in \B$. Let $\C=C^*(\A,|b|)$, so that $\A \subseteq \C \subseteq \B$, and let $\J \lhd \C$ be the ideal generated by $|b|$. It is easy to see that
\[
    \J = \o{\span}(\{c|b|: c \in \C\} \cup \{c|b|a: c \in \C, ~ a \in \A\}).
\]
Now for all $c \in \C$, we have that
\[
    \theta(c|b|)^*\theta(c|b|) \leq \theta(|b|c^*c|b|) \leq \|c\|^2\theta(b^*b) = 0.
\]
Likewise, for all $c \in \C$ and $a \in \A$, we have that
\[
    \theta(c|b|a)^*\theta(c|b|a) \leq \theta(a^*|b|c^*c|b|a) \leq \|c\|^2\theta(a^*b^*ba) = \|c\|^2a^*\theta(b^*b)a = 0.
\]
It follows that $\theta(j)=0$ for all $j \in \J$. So if $h \in \J \cap \A$, then
\[
    h=\theta(h)=0.
\]
That is, $\J \cap \A = \{0\}$. Since $\A \subseteq \C$ is essential, $\J=\{0\}$, which implies $|b|=0$, which in turn implies $b=0$. So $\theta$ is faithful.

($\Leftarrow$) Conversely, suppose every pseudo-expectation for $\A \subseteq \B$ is faithful. Let $\A \subseteq \C \subseteq \B$ and $\J \lhd \C$. Assume that $\J \cap \A = \{0\}$. Then the map $\pi:\A+\J \to \A:a+j \mapsto a$ is a well-defined $*$-homomorphism. By injectivity, there exists a c.c.p. map $\theta:\B \to I(\A)$ such that $\theta|_{\A+\J}=\pi$. Then $\theta$ is a pseudo-expectation for $\A \subseteq \B$, therefore faithful. It follows that $\pi$ is faithful, which implies $\J=\{0\}$. So $\A \subseteq \C$ is essential.
\end{proof}

\subsection{The Main Question} \label{question}

In light of Theorem \ref{equivalence} above, the condition ``every pseudo-expectation is faithful'' has significant structural consequences for a $C^*$-inclusion. An obvious way to establish that \ul{every} pseudo-expectation is faithful is to show that there is a \ul{unique} pseudo-expectation which happens to be faithful. Lacking examples to the contrary, Pitts and the author asked whether this is the only way. Namely, we posed:
 
\begin{question}[\cite{PittsZarikian2015}, Section 7.1, Q6] \label{Q1}
If every pseudo-expectation for a $C^*$-inclusion $\A \subseteq \B$ is faithful (equivalently, if $\A \subseteq \B$ is hereditarily essential), must there be a unique pseudo-expectation?
\end{question}

In \cite{PittsZarikian2015}, Pitts and the author identified two classes of $C^*$-inclusions for which Question \ref{Q1} has an affirmative answer:
\begin{itemize}
\item unital abelian $C^*$-inclusions $C(X) \subseteq C(Y)$ \cite[Corollary 3.22]{PittsZarikian2015};
\item $W^*$-inclusions $\D \subseteq \M$ with $\D$ injective \cite[Proposition 7.5]{PittsZarikian2015}.
\end{itemize}
In \cite{KwasniewskiMeyer2022}, Kwa\'{s}niewski and Meyer utilized their notion of \emph{aperiodicity} (see \cite[Definition 2.3]{KwasniewskiMeyer2022}) to expand the list substantially, adding a wide swath of ``dynamical'' $C^*$-inclusions:
\begin{itemize}
\item $C^*$-inclusions $\A \subseteq \A \rtimes_{\ess} S$, where $\A$ is essentially separable, essentially simple, or essentially Type I, and $\A \rtimes_{\ess} S$ is the \emph{essential crossed product} of $\A$ by the action of an inverse semigroup $S$ \cite[Theorems 7.2 and 7.3]{KwasniewskiMeyer2022}.
\end{itemize}
The purpose of this note is twofold. On the one hand, we add modestly to the classes of $C^*$-inclusions for which Question \ref{Q1} is known to have a positive answer. On the other hand, we settle the general question \ul{negatively}, leading to a reformulation of the question.\\

\subsection{Main Results} \label{main}

If the inverse semigroup $S$ in the aforementioned result of Kwa\'{s}niewski and Meyer is actually a group $G$, then $\A \rtimes_{\ess} S = \A \rtimes_r G$, the reduced crossed product. In that case, we show that Question \ref{Q1} has an affirmative answer with no restrictions on $\A$ (Theorem \ref{crossed product}).\footnote{The unital version of this result was first announced at GPOTS 2018 (Miami, OH), and then again at IWOTA 2019 (Lisbon), but remained unpublished for lack of complementary results.} By a ``stabilization'' trick, we extend this result to \ul{twisted} crossed products (Theorem \ref{twisted}), and by an ``iteration'' trick to twisted crossed products by normal subgroups (Corollary \ref{normal subgroup}).

\begin{theorem*}[main positive result]
Let $(\A,G,\alpha,\sigma)$ be a discrete twisted $C^*$-dynamical system and $N \lhd G$ be a normal subgroup. If every pseudo-expectation for the $C^*$-inclusion $\A \rtimes_{\alpha,r}^\sigma N \subseteq \A \rtimes_{\alpha,r}^\sigma G$ is faithful, then there exists a unique pseudo-expectation.
\end{theorem*}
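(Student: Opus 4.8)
The plan is to reduce the general statement to the untwisted group case and then prove that case directly by constructing the pseudo-expectation via the canonical conditional expectation onto the coefficient algebra.

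The plan is to prove the result in three stages, mirroring the reductions indicated in Section~\ref{main}: first the untwisted full-group case $\A \subseteq \A \rtimes_{\alpha,r} G$, then the twisted full-group case by a stabilization argument, and finally the normal-subgroup case by an iteration argument. Throughout, write $E : \A \rtimes_{\alpha,r}^\sigma G \to \A \subseteq I(\A)$ for the canonical faithful conditional expectation onto the coefficient algebra, which is itself a pseudo-expectation; the goal is to show that under the faithfulness hypothesis it is the \emph{only} one.

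For the core untwisted case, let $\{u_g\}_{g \in G}$ denote the canonical unitaries and let $\theta : \A \rtimes_{\alpha,r} G \to I(\A)$ be an arbitrary pseudo-expectation. Since $\alpha$ extends canonically to an action $\beta : G \to \Aut(I(\A))$ and since $\theta$ is an $\A$-bimodule map (recorded in Section~\ref{PsExp}), I would first analyze the ``Fourier coefficients'' $T_g(a) := \theta(a u_g)$. Using $u_g a u_g^* = \alpha_g(a)$ together with the bimodule property, a short computation gives $T_g(b\, a\, \alpha_g(c)) = b\, T_g(a)\, c$ for all $a,b,c \in \A$, so that each $T_g$ has the form $T_g(a) = a v_g$ for an intertwiner $v_g \in I(\A)$ satisfying $v_g b = \beta_g(b) v_g$ for all $b \in \A$. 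The canonical expectation $E$ corresponds to the trivial family $v_e = 1$ and $v_g = 0$ for $g \neq e$. Consequently, uniqueness of the pseudo-expectation is equivalent to the vanishing of all such intertwiners for $g \neq e$; that is, to the condition that for each $g \neq e$ the set $\{x \in I(\A) : x b = \beta_g(b) x \text{ for all } b \in \A\}$ is $\{0\}$, which is precisely a Kwa\'sniewski--Meyer-type aperiodicity of the extended action $\beta$.

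The heart of the argument---and the step I expect to be the main obstacle---is to show that the faithfulness hypothesis forces this no-intertwiner condition. I would argue by contraposition: given a nonzero intertwiner $v = v_{g_0}$ for some $g_0 \neq e$, I would manufacture a \emph{non-faithful} pseudo-expectation, contradicting the hypothesis. Passing to the injective envelope (a monotone complete $C^*$-algebra) one may normalize $v$ via its polar decomposition so that $v^*v$ and $vv^*$ are projections implementing a partial equivalence between $\id$ and $\beta_{g_0}$ on corners of $I(\A)$; the delicate point is to use this partial isometry to perturb $E$ into a genuinely new c.c.p.\ $\A$-bimodule map $\theta' : \A \rtimes_{\alpha,r} G \to I(\A)$ fixing $\A$, and to verify that the nontrivial relation $\theta'(u_{g_0}) \neq 0$ forces a nonzero element into the kernel of $\theta'$. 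Establishing complete positivity of this perturbation is the technical crux.

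Finally, I would dispatch the two reductions. For the twisted full-group case, the Packer--Raeburn stabilization trick realizes $(\A \rtimes_{\alpha,r}^\sigma G)\otimes \K$ as an \emph{untwisted} reduced crossed product $(\A \otimes \K) \rtimes_{r} G$; I would check that both the faithfulness hypothesis and the uniqueness conclusion are insensitive to tensoring with $\K$ and to the exterior equivalence, so that the untwisted case applies. For the normal-subgroup case, the standard iterated-crossed-product decomposition $\A \rtimes_{\alpha,r}^\sigma G \cong (\A \rtimes_{\alpha,r}^\sigma N) \rtimes_{\beta,r}^\tau (G/N)$ rewrites the inclusion $\A \rtimes_{\alpha,r}^\sigma N \subseteq \A \rtimes_{\alpha,r}^\sigma G$ as one of the form $\B \subseteq \B \rtimes_{\beta,r}^\tau (G/N)$ with $\B = \A \rtimes_{\alpha,r}^\sigma N$, to which the (twisted) full-group result applies directly.
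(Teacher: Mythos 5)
Your overall architecture coincides with the paper's: prove the untwisted case $\A \subseteq \A \rtimes_{\alpha,r} G$ by analyzing the extended action on $I(\A)$, upgrade to twisted crossed products by Packer--Raeburn stabilization, and handle normal subgroups by the iterated decomposition (your last paragraph is essentially Theorem \ref{twisted} and Corollary \ref{normal subgroup}). The problem is that the step you yourself flag as ``the main obstacle'' is where all the content lies, and the route you sketch for it does not work as stated. Your plan is contraposition: from a nonzero intertwiner $v$ for $\beta_{g_0}$, build a perturbed pseudo-expectation $\theta'$ and ``verify that $\theta'(u_{g_0}) \neq 0$ forces a nonzero element into the kernel of $\theta'$.'' The natural kernel candidates are elements of the form $a u_{g_0} - a v$, but these do \emph{not} lie in $\A \rtimes_{\alpha,r} G$ unless $av$ happens to fall back into $\A$ --- that accident is exactly what makes the paper's Example \ref{almost all faithful} work (there $TU \in K(\ell^2) \subseteq \A$), and it fails in general. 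Indeed, the paper never exhibits a kernel element for its constructed map, and there may be no accessible one; your non-faithfulness verification is a genuine gap, not a routine technicality. (Also, establishing complete positivity is \emph{not} the crux: the paper's map $\theta = \pi(\cdot) + \bbE(\cdot)p^\perp$ is c.p. for free, being a sum of c.p. maps into orthogonal corners.)

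What the paper does instead is use the faithfulness hypothesis \emph{positively} rather than hunting for a non-faithful map. Assuming proper outerness of $\tilde\alpha$ fails at $g$, Lemma \ref{inner action} (which needs Sakai's logarithm theorem for $AW^*$-algebras --- pointwise innerness of the cyclic action does not upgrade to a unitary representation of $H = \langle g \rangle$ by polar decomposition alone) yields a $*$-homomorphism $\pi: \A \rtimes_{\alpha,r} H \to I(\A)p$, hence the pseudo-expectation $\theta$ above, which extends to a pseudo-expectation $\Theta$ on $\A \rtimes_{\alpha,r} G$; by hypothesis $\Theta$, hence $\theta$, is faithful. Now set $\J = \A \cap \A p$, which is nonzero by Hamana. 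On $\J \rtimes_{\alpha,r} H$ one has $\theta = \pi$, so $\pi$ is injective there; extending $\bbE \circ \pi^{-1}$ by injectivity to a map $\gamma$ on $I(\J) = I(\A)p$ and invoking rigidity forces $\gamma = \id$, hence $\pi = \bbE$ on $\J \rtimes_{\alpha,r} H$ --- absurd, since $\pi(jg) = j u_g \neq 0$ for $0 \neq j \in \J$ while $\bbE(jg) = 0$. This rigidity argument is the missing idea in your proposal. Two smaller gaps: your ``check that faithfulness and uniqueness are insensitive to tensoring with $\bbK_G$'' is the paper's Theorem \ref{amplification}, which requires approximate unitality of the inclusion and an order-limit argument in $I(\A)$; and your Fourier analysis produces intertwiners over $\A$, whereas the proper-outerness criterion of \cite{Zarikian2019a} concerns central projections and innerness in $I(\A)$, so the passage between the two also needs justification.
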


To be clear, our results are subsumed by the results of Kwa\'{s}niewski and Meyer, except when $\A$ fails to have a reasonably nice essential ideal. That being said, our arguments involve a hands-on analysis of pseudo-expectations, which might be independently interesting.\\

On the other hand, we show that in general Question \ref{Q1} has a negative answer by exhibiting $C^*$-irreducible inclusions with multiple conditional expectations (Theorem \ref{counterexample}). (We recall from Section \ref{hered} that $C^*$-irreducible inclusions are hereditarily essential, and from Section \ref{PsExp} that conditional expectations are pseudo-expectations.)

\begin{theorem*}[main negative result]
There exist $C^*$-irreducible inclusions of the form $C_r^*(G) \subseteq C(X) \rtimes_r G$ with multiple conditional expectations.
\end{theorem*}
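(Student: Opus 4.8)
The plan is to manufacture the examples from group actions that are dynamically \emph{rich} enough to carry several invariant measures, yet rigid enough to force $C^*$-irreducibility. Concretely, I would fix a $C^*$-simple group $G$ (for definiteness $G = \bbF_2$) together with a minimal, topologically free action $G \curvearrowright X$ on a compact Hausdorff space that is \emph{not} uniquely ergodic, and consider the inclusion $C_r^*(G) \subseteq C(X) \rtimes_r G$. Two things then have to be checked: that distinct invariant measures produce distinct conditional expectations onto $C_r^*(G)$, and that the inclusion is $C^*$-irreducible. Since $C^*$-irreducible inclusions are hereditarily essential (Section \ref{hered}) and conditional expectations are pseudo-expectations (Section \ref{PsExp}), this would exhibit hereditarily essential inclusions with non-unique pseudo-expectations.

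For the first point, each $G$-invariant probability measure $\mu$ on $X$ should yield a map $E_\mu : C(X) \rtimes_r G \to C_r^*(G)$ determined on the dense $*$-subalgebra by $E_\mu(\sum_g f_g u_g) = \sum_g \mu(f_g) u_g$. I would verify that $E_\mu$ is a genuine conditional expectation by realizing it as a compression of the regular covariant representation: represent $C(X) \rtimes_r G$ on $\ell^2(G) \otimes L^2(X,\mu)$ in the standard way, so that $u_g \mapsto \lambda_g \otimes 1$ while $C(X)$ acts diagonally, and then compress the $L^2(X,\mu)$-factor onto the cyclic vector $\Omega = 1$. Invariance of $\mu$ is precisely what makes this compression send $f \mapsto \mu(f)\,1$ and $u_g \mapsto \lambda_g$, so it is a u.c.p.\ map fixing $C_r^*(G)$, hence a conditional expectation landing in $C_r^*(G)$ by continuity. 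As $E_\mu(f) = \mu(f)\,1$ for $f \in C(X)$, distinct invariant measures give distinct $E_\mu$; thus as soon as the action fails to be uniquely ergodic the inclusion has multiple conditional expectations.

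For $C^*$-irreducibility I would combine $C^*$-simplicity of $G$ (which makes $C_r^*(G)$ simple) with minimality and topological freeness of $G \curvearrowright X$. Minimality makes $C(X) \rtimes_r G$ and each ``quotient'' crossed product $C(Y) \rtimes_r G$ simple (every factor $Y$ of a minimal system is again minimal, and for $C^*$-simple $G$ the freeness of $G \curvearrowright \partial_F G$ already forces simplicity of minimal crossed products), while topological freeness is what controls the intermediate $C^*$-algebras $C_r^*(G) \subseteq \C \subseteq C(X) \rtimes_r G$; assembling these is exactly the $C^*$-irreducibility criterion for such inclusions available in \cite{Rordam2023}. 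As an alternative route fitting the present framework, one can invoke Theorem \ref{equivalence} and instead show directly that every pseudo-expectation $\theta : C(X) \rtimes_r G \to I(C_r^*(G))$ is faithful, using topological freeness to detect ideals and minimality to ensure every invariant measure has full support (which is what makes each $E_\mu$ itself faithful).

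The genuine obstacle is the \emph{existence} of the dynamical input: a minimal, topologically free action of a $C^*$-simple group that is not uniquely ergodic. Minimality forces every invariant measure to have full support and is fragile, so the naive attempts---products, skew products over a uniquely ergodic base, or co-induction from a non-uniquely ergodic $\bbZ$-action---all tend to break either minimality or ergodic multiplicity (co-induction, for instance, preserves freeness and the invariant-measure multiplicity but destroys minimality). The hard part will therefore be to reconcile minimality with non-unique ergodicity for a non-amenable group; I would resolve it by invoking the topological-dynamics machinery that realizes nontrivial simplices of invariant measures as those of minimal free actions, now available for suitable non-amenable groups, to obtain a minimal free (hence topologically free) action of $\bbF_2$ on the Cantor set admitting at least two invariant measures. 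Feeding such an action into the two steps above produces a $C^*$-irreducible inclusion $C_r^*(\bbF_2) \subseteq C(X) \rtimes_r \bbF_2$ with multiple conditional expectations, settling Question \ref{Q1} negatively.
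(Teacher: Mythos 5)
Your proposal is correct and takes essentially the same route as the paper: a $C^*$-simple group ($\bbF_2$), a free minimal non-uniquely ergodic action on the Cantor set (the ``machinery'' you invoke is exactly \cite[Corollary 1.1]{Elek2021}), $C^*$-irreducibility from the intermediate-simplicity theorem for minimal actions of $C^*$-simple groups (\cite[Theorem 1.3]{AmrutamKalantar2020}, which is also the criterion underlying \cite{Rordam2023}), and distinct conditional expectations $E_\mu$ arising from distinct invariant measures. The only cosmetic differences are that you verify $E_\mu$ by hand via compression of the regular covariant representation, where the paper cites \cite[Exercise 4.1.4]{BrownOzawa2008}, and that you route the irreducibility citation through \cite{Rordam2023} rather than \cite{AmrutamKalantar2020}.
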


Our counterexamples are not \emph{regular}, leading us to revise Question \ref{Q1} accordingly (see Question \ref{Q3} in the concluding remarks).

\section{Positive Results}

\subsection{Crossed Products by Discrete Groups} \label{crossed}

In Theorem \ref{crossed product} below, we show that Question \ref{Q1} has an affirmative answer for $C^*$-inclusions of the form $\A \subseteq \A \rtimes_{\alpha,r} G$, where $(\A,G,\alpha)$ is a discrete $C^*$-dynamical system. In some sense, half the battle has already been fought---uniqueness of pseudo-expectations for such inclusions has been characterized in terms of the dynamics, at least in the unital case \cite[Theorem 3.5]{Zarikian2019a}. The relevant notion is \emph{proper outerness} of the action $\alpha:G \curvearrowright \A$ (see \cite[Section 2.3]{Zarikian2019a}). For general $C^*$-inclusions $\A \subseteq \B$, the correct generalization of proper outerness appears to be \emph{aperiodicity} (see \cite[Definition 2.3]{KwasniewskiMeyer2022}). Aperiodicity implies uniqueness of pseudo-expectations \cite[Theorem 3.6]{KwasniewskiMeyer2022}, and the reverse implication remains a tantalizing open problem.\\

The following dynamical lemma will be needed for the proof. It says that a pointwise inner action of a discrete cyclic group on an $AW^*$-algebra is actually an inner action. The corresponding statement for general $C^*$-algebras is false \cite[Example 8.2.8]{GKPT2018}.

\begin{lemma} \label{inner action}
Let $\A$ be an $AW^*$-algebra, $G = \langle g \rangle$ be a discrete cyclic group, and $\alpha:G \to \Aut(\A)$ be a homomorphism. If $\alpha_g$ is inner, then there exists a unitary representation $u:G \to \A$ such that $\alpha_h = \Ad(u_h)$, $h \in G$.
\end{lemma}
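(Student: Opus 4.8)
The plan is to split into the two cases $G \cong \bbZ$ and $G \cong \bbZ/n\bbZ$, with all of the work concentrated in the finite case. If $G$ is infinite cyclic, I would write $\alpha_g = \Ad(u)$ for some unitary $u \in \A$ and simply set $u_{g^k} := u^k$ for $k \in \bbZ$; this is unambiguous since $g$ has infinite order, and $\Ad(u_{g^k}) = \Ad(u)^k = \alpha_{g^k}$, so $u \colon G \to \U(\A)$ is the desired representation.

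Now suppose $G$ has finite order $n$, so $g^n = e$ and $\alpha_g^n = \id$. Again write $\alpha_g = \Ad(u)$. The naive assignment $u_{g^k} := u^k$ need not be well defined, since a priori $u^n \neq 1$; all we know is $\Ad(u^n) = \alpha_g^n = \id$, i.e. $w := u^n$ is a \emph{central} unitary, $w \in \U(Z(\A))$. The idea is to correct $u$ by a central unitary: I would produce $z \in \U(Z(\A))$ with $z^n = w^{-1}$ and set $v := zu$. As $z$ is central, $\Ad(v) = \Ad(u) = \alpha_g$, while $v^n = z^n u^n = w^{-1}w = 1$. Thus $u_{g^k} := v^k$ is a well-defined homomorphism $G \to \U(\A)$ with $\Ad(u_{g^k}) = \alpha_{g^k}$, as required.

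The main obstacle is the existence of the central $n$-th root $z$ of $w^{-1}$, and this is precisely where the $AW^*$-hypothesis does its work (for a general $C(X)$, winding obstructs continuous $n$-th roots of $\bbT$-valued functions). The center $Z(\A)$ is a commutative $AW^*$-algebra, hence $*$-isomorphic to $C(X)$ for a Stonean (extremally disconnected compact Hausdorff) space $X$, and under this identification $w^{-1}$ is a continuous map $X \to \bbT$. I would fix a bounded Borel branch $f \colon \bbT \to \bbT$ of the $n$-th root, so that $f(\lambda)^n = \lambda$ for every $\lambda \in \bbT$ (for instance $f(e^{i\theta}) = e^{i\theta/n}$ on $\theta \in [0,2\pi)$), and form the bounded Borel function $f \circ w^{-1} \colon X \to \bbT$. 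Because $X$ is Stonean, every Borel subset of $X$ differs from a clopen set by a meager set, and consequently every bounded Borel function on $X$ agrees, off a meager set, with a (unique) continuous function; this is the Borel functional calculus available in commutative $AW^*$-algebras. Applying it to $f \circ w^{-1}$ yields $z \in C(X) = Z(\A)$ with $z = f \circ w^{-1}$ off some meager set $M$. Then $|z| = 1$ and $z^n = w^{-1}$ hold on $X \setminus M$; since $X$ is compact Hausdorff, hence Baire, $X \setminus M$ is dense, so the continuous functions $|z|$ and $z^n$ agree with $1$ and $w^{-1}$ respectively on a dense set and therefore everywhere. Thus $z$ is a central unitary with $z^n = w^{-1}$, completing the construction.
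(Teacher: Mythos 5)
Your proof is correct, and its skeleton matches the paper's: the infinite-order case is dispatched by $u_{g^k}:=u^k$, and in the finite-order case the obstruction is the central unitary $w=u^n$, which you kill by multiplying $u$ with a central $n$-th root of $w^{-1}$ (centrality giving $\Ad(zu)=\Ad(u)$ and $(zu)^n=z^nu^n$). The genuine difference is how that root is produced. The paper cites Berberian for $Z(\A)$ being an $AW^*$-algebra and then invokes Sakai's lemma \cite{Sakai1955} to write $v^n=e^{it}$ with $t \in Z(\A)_{sa}$, so that $e^{-it/n}$ is the desired root; you instead prove root extraction from scratch, identifying $Z(\A) \cong C(X)$ with $X$ Stonean and running the Baire-category argument (Borel sets agree with clopen sets modulo meager sets, hence bounded Borel functions agree off a meager set with continuous ones) to convert a Borel branch of the $n$-th root into a continuous one. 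In effect you have reproved the special case of Sakai's lemma that the paper needs: your route is self-contained and makes transparent exactly where extremal disconnectedness enters (winding obstructs continuous roots of unitaries in $C(\bbT)$, but not modulo meager sets on a Stonean spectrum), while the paper's route is shorter because it outsources precisely this step to the literature. The one fact you assert without proof---that on a Stonean space every bounded Borel function agrees with a unique continuous function off a meager set---is indeed standard (it is the mechanism behind the ``Borel functional calculus'' in monotone complete $C^*$-algebras, cf.\ \cite{SaitoWright2015}), so no gap results.
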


\begin{proof}
By assumption, $\alpha_g = \Ad(v)$ for some $v \in \U(\A)$. It follows that $\alpha_{g^k} = \Ad(v^k)$, $k \in \bbZ$. If $|g| = \infty$, then we may define $u_{g^k} = v^k$, $k \in \bbZ$. Suppose instead that $|g| = n$ for some $n \in \bbN$. Then $\Ad(v^n) = \alpha_{g^n} = \id$, which implies $v^n \in Z(\A)$. Since $\A$ is an $AW^*$-algebra, so is $Z(\A)$ \cite[Section 4, Proposition 8(v)]{Berberian1972}. By \cite[Lemma 6]{Sakai1955}, there exists $t \in Z(\A)_{sa}$ such that $v^n = e^{it}$. Then we may define $u_{g^k} = (e^{-it/n}v)^k$, $k \in \bbZ$.
\end{proof}

Before turning to the proof of the Theorem \ref{crossed product}, we highlight three facts about discrete crossed products which will be used without explicit mention:
\begin{itemize}
\item If $G$ is amenable, then $\A \rtimes_{\alpha,r} G = \A \rtimes_\alpha G$ \cite[Theorem 7.7.7]{Pedersen1979}.
\item If $\A_0 \subseteq \A$ is $\alpha$-invariant, then $\A_0 \rtimes_{\alpha,r} G \subseteq \A \rtimes_{\alpha,r} G$ \cite[Proposition 7.7.9]{Pedersen1979}.
\item If $H \subseteq G$ is a subgroup, then $\A \rtimes_{\alpha,r} H \subseteq \A \rtimes_{\alpha,r} G$.
\end{itemize}
We also remind the reader that if $\A$ is $C^*$-algebra and $\alpha \in \Aut(\A)$, then there exists a unique $\tilde{\alpha} \in \Aut(I(\A))$ such that $\tilde{\alpha}|_{\A}=\alpha$ \cite[Corollary 4.2]{Hamana1979}. Thus if $(\A,G,\alpha)$ is a discrete $C^*$-dynamical system, we have $\A \rtimes_{\alpha,r} G \subseteq I(\A) \rtimes_{\tilde{\alpha},r} G$.

\begin{theorem} \label{crossed product}
Let $(\A,G,\alpha)$ be a discrete $C^*$-dynamical system. If every pseudo-expectation for the $C^*$-inclusion $\A \subseteq \A \rtimes_{\alpha,r} G$ is faithful, then there exists a unique pseudo-expectation.
\end{theorem}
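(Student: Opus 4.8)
The plan is to show that the canonical conditional expectation $E \colon \A \rtimes_{\alpha,r} G \to \A \subseteq I(\A)$, determined by $E(u_g) = 0$ for $g \neq e$, is the \emph{only} pseudo-expectation. Since any pseudo-expectation $\theta$ is an $\A$-bimodule map and the finite sums $\sum_g a_g u_g$ are dense, $\theta$ is completely determined by the coefficients $\phi_g := \theta(u_g) \in I(\A)$; thus it suffices to prove $\phi_g = 0$ for every $g \neq e$. The first step is to extract the defining constraint on these coefficients: applying the bimodule property to the covariance relation $u_g a = \alpha_g(a) u_g$ yields the intertwining identity $\phi_g a = \tilde{\alpha}_g(a) \phi_g$ for all $a \in \A$, where $\tilde{\alpha}_g \in \Aut(I(\A))$ is the canonical extension of $\alpha_g$.

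Next I would reduce to the case of a cyclic group. Fixing $g \neq e$ and setting $H = \langle g \rangle$, the restriction $\theta|_{\A \rtimes_{\alpha,r} H}$ is a pseudo-expectation for the intermediate inclusion $\A \subseteq \A \rtimes_{\alpha,r} H$ (the injective envelope $I(\A)$ depends only on $\A$). By Theorem \ref{equivalence} our hypothesis makes $\A \subseteq \A \rtimes_{\alpha,r} G$ hereditarily essential, a property that passes to every intermediate subalgebra; hence every pseudo-expectation for $\A \subseteq \A \rtimes_{\alpha,r} H$ is again faithful. So it is enough to treat $G = \langle g \rangle$ and to prove that $\phi_g = 0$.

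Assume toward a contradiction that $\phi_g \neq 0$. The idea is to convert the intertwiner $\phi_g$ into an honest unitary implementation of the action and thereby manufacture a non-faithful pseudo-expectation, contradicting the (inherited) faithfulness hypothesis. Because $I(\A)$ is an $AW^*$-algebra, I would take the polar decomposition $\phi_g = w |\phi_g|$. A short computation shows $|\phi_g| = (\phi_g^* \phi_g)^{1/2}$ commutes with $\A$, and the standard annihilator calculus in $AW^*$-algebras upgrades the intertwining identity to $w a = \tilde{\alpha}_g(a) w$ for all $a \in \A$, where $w \neq 0$ is a partial isometry whose source and range projections $w^*w,\, ww^*$ lie in $\A' \cap I(\A)$. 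Passing to a suitable $\tilde{\alpha}_g$-invariant ideal (or corner) $\J$ of $I(\A)$ on which $\tilde{\alpha}_g$ becomes genuinely inner, Lemma \ref{inner action} applied to the cyclic group $\langle g \rangle$ produces a unitary representation $u \colon \langle g \rangle \to \U(\J)$ with $\tilde{\alpha}_{g^k}|_{\J} = \Ad(u_{g^k})$. This is exactly the point where the cyclic reduction and the $AW^*$ hypothesis of Lemma \ref{inner action} are indispensable: a merely pointwise-inner $\tilde{\alpha}_g$ need not arise from a representation of the group, and without a genuine representation the untwisting below is unavailable.

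Finally, the unitary representation untwists the corner crossed product, $\J \rtimes_{\tilde{\alpha}} \langle g \rangle \cong \J \otimes C^*(\langle g \rangle)$, so that each character $\chi$ of $\langle g \rangle$ (a point of $\widehat{\langle g \rangle}$, which is $\bbT$ or $\bbZ/n$) yields a conditional expectation $F_\chi$ onto $\J$ obtained by composing the untwisting isomorphism with $\id \otimes \chi$. Restricting such an $F_\chi$ back to $\A \rtimes_{\alpha,r} \langle g \rangle$ produces a pseudo-expectation, and for a suitable choice of $\chi$ this pseudo-expectation is \emph{not} faithful, contradicting the faithfulness established in the cyclic reduction. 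This forces $\phi_g = 0$, hence $\theta = E$, and the pseudo-expectation is unique. I expect the main obstacle to be precisely this last mechanism: controlling the relative commutant $\A' \cap I(\A)$, selecting the invariant ideal $\J$ on which the action is truly inner, and --- most delicately --- verifying that the non-faithfulness of the character-twisted expectation genuinely descends to the honest crossed product $\A \rtimes_{\alpha,r} \langle g \rangle$, whose coefficients lie in $\A$ rather than in the ambient $I(\A)$ where the implementing unitaries live.
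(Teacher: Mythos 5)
Your outline shares the paper's skeleton---reduce to the cyclic subgroup $H=\langle g\rangle$ (your reduction via hereditary essentiality is valid), apply Lemma \ref{inner action}, and manufacture a pseudo-expectation that contradicts faithfulness---but both pivotal steps are genuine gaps. The first is the passage from $\phi_g \neq 0$ to innerness of $\tilde{\alpha}_g$ on an invariant corner. Your intertwining identity $\phi_g a = \tilde{\alpha}_g(a)\phi_g$ holds only for $a \in \A$, and polar decomposition gives a partial isometry $w$ intertwining over $\A$ with $w^*w,\, ww^* \in \A' \cap I(\A)$; but Lemma \ref{inner action} (and the definition of proper outerness) requires a nonzero $\tilde{\alpha}_g$-invariant projection $p \in \Proj(Z(I(\A)))$ together with a \emph{unitary} of $I(\A)p$ implementing $\tilde{\alpha}_g$ on \emph{all} of $I(\A)p$. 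Upgrading the intertwining from $\A$ to $I(\A)$, and patching a partial-isometry intertwiner into a unitary on a central invariant summand, is precisely the hard content of \cite[Theorem 3.5]{Zarikian2019a}; the paper does not re-prove it, but instead first extends every pseudo-expectation to the unital inclusion $I(\A) \subseteq I(\A) \rtimes_{\tilde{\alpha},r} G$ (by injectivity plus rigidity of the injective envelope) and then invokes that theorem, so that the only thing left to check is proper outerness of $\tilde{\alpha}$. That reduction also repairs a wrinkle you ignore: when $\A$ is non-unital, $u_g \notin \A \rtimes_{\alpha,r} G$, so $\phi_g = \theta(u_g)$ is not even defined, whereas $I(\A) \rtimes_{\tilde{\alpha},r} G$ is always a unital inclusion.

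The second gap is the one you flag yourself as ``the main obstacle,'' and it is fatal as proposed: one cannot in general exhibit non-faithfulness of a character-twisted expectation directly. A kernel element of $\pi_\chi\colon \sum a_h h \mapsto \sum \chi(h)\, a_h u_h$ would have to look like $jg - ju_g$ with $ju_g \in \A$, and nothing forces such elements to exist, since $u_g$ lives in $I(\A)p$ rather than in $\A$ (this works in Example \ref{almost all faithful} only because $K(\ell^2)$ absorbs $U$). Your $F_\chi$ is also not a pseudo-expectation as stated: on $\A$ it gives $ap \neq a$, so one must add the correction term $\bbE(\cdot)p^\perp$, as the paper does. The paper's argument runs in the opposite direction: the constructed map $\theta = \pi + \bbE(\cdot)p^\perp$ is faithful (forced by the hypothesis after extending it to $\A \rtimes_{\alpha,r} G$), and faithfulness is then shown to be \emph{impossible} by passing to the ideal $\J = \A \cap \A p$---nonzero by \cite[Lemma 1.2]{Hamana1982centre}, with $I(\J) = I(\A)p$ by \cite[Lemma 1.3]{Hamana1982centre}---where rigidity of $I(\J)$ forces $\pi = \bbE$ on $\J \rtimes_{\alpha,r} H$, whence $ju_g = \bbE(jg) = 0$ for all $j \in \J$, contradicting $\J \neq \{0\}$. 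This rigidity-plus-Hamana mechanism is the missing idea; without it (and without citing \cite[Theorem 3.5]{Zarikian2019a} to fill the first gap) your argument does not close.
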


\begin{proof}
Suppose that every pseudo-expectation for $\A \subseteq \A \rtimes_{\alpha,r} G$ is faithful. To show that $\A \subseteq \A \rtimes_{\alpha,r} G$ has a unique pseudo-expectation, it suffices to show that $I(\A) \subseteq I(\A) \rtimes_{\tilde{\alpha},r} G$ has a unique pseudo-expectation, since every pseudo-expectation for the former inclusion extends to a pseudo-expectation for the latter inclusion. Indeed, if $\Theta:\A \rtimes_{\alpha,r} G \to I(\A)$ is a c.c.p. map such that $\Theta(a)=a$ for all $a \in \A$, then (by injectivity) there exists a c.c.p. $\tilde{\Theta}:I(\A) \rtimes_{\tilde{\alpha},r} G \to I(\A)$ such that $\tilde{\Theta}|_{\A \rtimes_{\alpha,r} G} = \Theta$. Then $\tilde{\Theta}|_{I(\A)}:I(\A) \to I(\A)$ is a c.c.p. map such that $\tilde{\Theta}(a)=a$ for all $a \in \A$. By the rigidity of the injective envelope, $\tilde{\Theta}(x)=x$ for all $x \in I(\A)$.\\

Now $I(\A)$ is an $AW^*$-algebra, in particular a unital $C^*$-algebra. Thus, by \cite[Theorem 3.5]{Zarikian2019a}, to show that $I(\A) \subseteq I(\A) \rtimes_{\tilde{\alpha},r} G$ has a unique pseudo-expectation, it suffices to show that $\tilde{\alpha}$ is properly outer. To that end, assume that there exists $e \neq g \in G$ and $0 \neq p \in \Proj(Z(I(\A)))$ such that $\tilde{\alpha}_g(p) = p$ and $\tilde{\alpha}_g|_{I(\A)p}$ is inner. Define $H = \langle g \rangle \subseteq G$. Since $I(\A)p$ is an $AW^*$-algebra \cite[Section 4, Proposition 8 (iii)]{Berberian1972}, Lemma \ref{inner action} implies there exists a unitary representation $u:H \to I(\A)p$ such that $\tilde{\alpha}_h|_{I(\A)p} = \Ad(u_h)$ for all $h \in H$. By the amenability of $H$ and the universal property of full crossed products \cite[Proposition 4.1.3]{BrownOzawa2008}, there exists a $*$-homomorphism $\pi:\A \rtimes_{\alpha,r} H \to I(\A)p$ such that $\pi(ah) = au_h$ for all $a \in \A$ and $h \in H$. In particular, $\pi(a) = ap$ for all $a \in \A$. Define a c.c.p. map $\theta:\A \rtimes_{\alpha,r} H \to I(\A)$ by the formula
\[
    \theta(x) = \pi(x)+\bbE(x)p^\perp, ~ x \in \A \rtimes_{\alpha,r} H,
\]
where $\bbE:\A \rtimes_{\alpha,r} H \to \A$ is the canonical faithful conditional expectation (see \cite[Proposition 4.1.9]{BrownOzawa2008}). Note that
\[
    \theta(a) = \pi(a) + \bbE(a)p^\perp = ap + ap^\perp = a, ~ a \in \A.
\]
Let $\Theta:\A \rtimes_{\alpha,r} G \to I(\A)$ be a c.c.p. extension of $\theta$. Then $\Theta$ is a pseudo-expectation for $\A \subseteq \A \rtimes_{\alpha,r} G$, therefore faithful. A fortiori, $\theta$ is faithful. Define $\J = \A \cap \A p$. Then $\J \neq \{0\}$ \cite[Lemma 1.2]{Hamana1982centre} and $I(\J) = I(\A)p$ \cite[Lemma 1.3]{Hamana1982centre}. Since $\J \subseteq \A$ is $\alpha$-invariant, $\J \rtimes_{\alpha,r} H \subseteq \A \rtimes_{\alpha,r} H$. Moreover, 
\[
    \theta(x) = \pi(x)+\bbE(x)p^\perp = \pi(x), ~ x \in \J \rtimes_{\alpha,r} H.
\]
Thus $\pi$ is faithful on $\J \rtimes_{\alpha,r} H$. By injectivity, there exists a c.c.p. map $\gamma:I(\J) \to I(\J)$ such that the following diagram commutes:
\begin{displaymath}
    \xymatrix{
        I(\J) \ar[drr]^{\gamma} & & \\
        \J \rtimes_{\alpha,r} H \ar[u]_{\pi} \ar[r]_{\bbE} & \J \ar[r] & I(\J)\\
    }
\end{displaymath}
For all $j \in \J$, we have that
\[
    \gamma(j) = \gamma(jp) = \gamma(\pi(j)) = \bbE(j) = j.
\]
By the rigidity of the injective envelope, we conclude that $\gamma=\id_{I(\J)}$. Thus $\pi=\bbE$. But then
\[
    ju_g = \pi(jg) = \bbE(jg) = 0, ~ j \in \J,
\]
which implies $\J = \{0\}$, a contradiction.
\end{proof}

We end this section with an example of a $C^*$-inclusion of the form $\A \subseteq \A \rtimes_{\alpha,r} G$ with a continuum of pseudo-expectations, all but two of which are faithful. This shows that one can come very close to satisfying the hypotheses of Theorem \ref{crossed product}, but fail the conclusion.

\begin{example} \label{almost all faithful}
Let $\A = K(\ell^2)+\bbC I \subseteq B(\ell^2)$ and $U \in B(\ell^2) \backslash \A$ be a self-adjoint unitary. Define $\alpha:\bbZ_2 \to \Aut(\A)$ by $\alpha_e=\id$ and $\alpha_g=\Ad(U)$. Then the pseudo-expectations for $\A \subseteq \A \rtimes_{\alpha,r} \bbZ_2$ are precisely the maps
\[
    \theta_t:\A \rtimes_{\alpha,r} \bbZ_2 \to B(\ell^2):A_0+A_1g \mapsto A_0+tA_1U, ~ -1 \leq t \leq 1.
\]
We have that $\theta_t$ is faithful for all $-1 < t < 1$, but $\theta_{\pm 1}$ are $*$-homomorphisms which are not faithful. Only $\theta_0$ is a conditional expectation.
\end{example}

\begin{proof}
Note that $I(\A) = B(\ell^2)$ \cite[Example 5.3]{Hamana1979opsys}. An easy computation shows that $\theta_{\pm 1}$ are $*$-homomorphisms, therefore c.c.p. maps. (Alternatively, this follows from the amenability of $\bbZ_2$ and the universal property of full crossed products.) Since $\theta_{\pm 1}$ restrict to the identity on $\A$, they are pseudo-expectations. Then $\theta_t$ is a pseudo-expectation for each $-1 < t < 1$, since it is a convex combination of $\theta_{\pm 1}$. Clearly $\theta_0=\bbE$, the canonical faithful conditional expectation. For $0 < t < 1$, we have that $\theta_t$ is faithful since it is a \ul{nontrivial} convex combination of $\theta_0$ and $\theta_1$. On the other hand, $\theta_1$ is not faithful, since for any $0 \neq T \in K(\ell^2)$, we have that $\theta_1(TU-Tg)=0$, even though $0 \neq TU-Tg \in \A \rtimes_{\alpha,r} \bbZ_2$. Similarly, $\theta_t$ is faithful for $-1 < t < 0$, while $\theta_{-1}$ is not faithful. If $t \neq 0$, then $\theta_t$ is not a conditional expectation, since $\theta_t(g) = tU \notin \A$.\\

It remains to show that there are no other pseudo-expectations. To that end, let $\theta:\A \rtimes_{\alpha,r} G \to B(\ell^2)$ be a c.c.p. map such that $\theta(A)=A$, for all $A \in \A$. Then
\[
    \theta(g)A = \alpha_g(A)\theta(g) = UAU\theta(g), ~ A \in \A,
\]
which implies $U\theta(g) \in \A' = \bbC I$. Thus $\theta(g) = tU$ for some $t \in \bbC$. Since $g^* = g^{-1} = g$ and $\theta$ is adjoint-preserving, we conclude that $t \in \bbR$. Since $\|g\| = 1$ and $\theta$ is contractive, we conclude that $-1 \leq t \leq 1$. Therefore $\theta = \theta_t$.
\end{proof}

\subsection{Twisted Crossed Products by Discrete Groups}

Let $(\A,G,\alpha,\sigma)$ be a discrete \ul{twisted} $C^*$-dynamical system \cite{PackerRaeburn1989}. In this section we show that Question \ref{Q1} has an affirmative answer for $C^*$-inclusions of the form $\A \subseteq \A \rtimes_{\alpha,r}^\sigma G$. We do this by using a ``stabilization'' trick to upgrade Theorem \ref{crossed product} from crossed products to twisted crossed products. Namely, we use that
\[
    \bbK_G \otimes (\A \rtimes_{\alpha,r}^\sigma G) \cong (\bbK_G \otimes \A) \rtimes_{\beta,r} G,
\]
for an (honest) action $\beta$ of $G$ on $\bbK_G \otimes \A$ \cite[Corollary 3.7]{PackerRaeburn1989}. (Here $\bbK_G$ denotes the compact operators on $\ell^2(G)$ and $\otimes$ is the minimal tensor product.) For our argument to work, we need to know that pseudo-expectations behave well with respect to stabilization, which in turn requires injective envelopes to behave well with respect to stabilization. Fortunately this is the case, as we now explain.\\

If $\A$ is a $C^*$-algebra and $J$ is any index set, we denote by $\bbM_J(\A)$ the set of all $J \times J$ matrices with entries in $\A$ whose finite submatrices are uniformly bounded in norm. This may not be a $C^*$-algebra, but it is an operator space (see \cite[Section 10.1]{EffrosRuan2000} or \cite[Section 1.2.26]{BlecherLeMerdy2004}). We denote by $\bbK_J(\A)$ the closure in $\bbM_J(\A)$ of the finitely-supported matrices, and note that this is a $C^*$-algebra with respect to the (well-defined) product
\[
    \begin{bmatrix} a_{ij} \end{bmatrix}\begin{bmatrix} b_{ij} \end{bmatrix} = \begin{bmatrix} \sum_{k \in J} a_{ik}b_{kj} \end{bmatrix}.
\]
Indeed, $\bbK_J(\A) \cong \bbK_J \otimes \A$. In fact, $\bbM_J(\A)$ is a contractive $\bbK_J(\A)$-bimodule with respect to the same product \cite[Section 1.2.27]{BlecherLeMerdy2004}. The following formula, which can be found in \cite[Corollary 4.6.12]{BlecherLeMerdy2004}, shows that the injective envelope behaves well with respect to stabilization:
\[
    I(\bbK_J(\A)) = \bbM_J(I(\A)).
\]

If $u:\A \to \B$ is a c.c.p. map between $C^*$-algebras, then the \emph{amplification}
\[
    u_J:\bbM_J(\A) \to \bbM_J(\B): \begin{bmatrix} a_{ij} \end{bmatrix} \mapsto \begin{bmatrix} u(a_{ij}) \end{bmatrix}
\]
is a completely contractive map between operator spaces. It restricts to a c.c.p. map $u_J:\bbK_J(\A) \to \bbK_J(\B)$, which can be identified with $\id \otimes u:\bbK_J \otimes \A \to \bbK_J \otimes \B$, where $\id:\bbK_J \to \bbK_J$ is the identity map.\\

In particular, if $\theta:\B \to I(\A)$ is a pseudo-expectation for the $C^*$-inclusion $\A \subseteq \B$, then the amplification $\theta_J:\bbK_J(\B) \to \bbK_J(I(\A))$ is a pseudo-expectation for the $C^*$-inclusion $\bbK_J(\A) \subseteq \bbK_J(\B)$, since
\[
    \bbK_J(I(\A)) \subseteq \bbM_J(I(\A)) = I(\bbK_J(\A)).
\]
The following theorem shows that if $\A \subseteq \B$ is an \ul{approximately unital} $C^*$-inclusion, then the pseudo-expectations for $\bbK_J(\A) \subseteq \bbK_J(\B)$ are \ul{precisely} the amplifications of the pseudo-expectations for $\A \subseteq \B$.

\begin{theorem} \label{amplification}
Let $\A \subseteq \B$ be an \ul{approximately unital} $C^*$-inclusion and $J$ be an index set. Then the map $\theta \mapsto \theta_J$ is a bijection between the pseudo-expectations $\theta:\B \to I(\A)$ for $\A \subseteq \B$ and the pseudo-expectations $\Theta:\bbK_J(\B) \to I(\bbK_J(\A))$ for $\bbK_J(\A) \subseteq \bbK_J(\B)$. Moreover, $\theta_J$ is faithful if and only if $\theta$ is faithful.
\end{theorem}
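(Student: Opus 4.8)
The forward direction---that $\theta\mapsto\theta_J$ sends pseudo-expectations to pseudo-expectations---has already been established in the discussion preceding the statement, so the plan is to prove three things: (i) injectivity of $\theta\mapsto\theta_J$, (ii) surjectivity, and (iii) the faithfulness equivalence. The crux is surjectivity. Given an arbitrary pseudo-expectation $\Theta\colon\bbK_J(\B)\to\bbM_J(I(\A))$ (recall $I(\bbK_J(\A))=\bbM_J(I(\A))$), I must manufacture a pseudo-expectation $\theta\colon\B\to I(\A)$ with $\Theta=\theta_J$. The idea is to recover $\theta$ by ``compressing $\Theta$ to a corner,'' exploiting that $\Theta$ is automatically a $\bbK_J(\A)$-bimodule map (being a pseudo-expectation) together with the common approximate unit supplied by the hypothesis.

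Write $e_{ij}$ for the standard matrix units in $\bbK_J$ and let $\{e_\lambda\}$ be the common approximate unit for $\A$ and $\B$, so that $e_{ii}\otimes e_\lambda\in\bbK_J(\A)$. The key computation is to feed $e_{ij}\otimes b$ into the bimodule identity: for $b\in\B$,
\[
    \Theta(e_{ij}\otimes e_\lambda b e_\lambda) = (e_{ii}\otimes e_\lambda)\,\Theta(e_{ij}\otimes b)\,(e_{jj}\otimes e_\lambda) = e_{ij}\otimes (e_\lambda M_{ij} e_\lambda),
\]
where $M_{ij}$ is the $(i,j)$-entry of $M:=\Theta(e_{ij}\otimes b)$. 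Since $\{e_\lambda\}$ is a two-sided approximate unit for $\B$, we have $e_\lambda b e_\lambda\to b$ in norm, so the left-hand side converges to $\Theta(e_{ij}\otimes b)=M$. As every approximant on the right is supported in the $(i,j)$-corner, so is the limit $M$; thus $M=e_{ij}\otimes\theta_{ij}(b)$ for a completely contractive map $\theta_{ij}\colon\B\to I(\A)$, which is even c.c.p.\ when $i=j$, being the compression of $\Theta$ by the $*$-homomorphism $b\mapsto e_{ii}\otimes b$. A second application of the bimodule identity---now multiplying on one side only, by $e_{ki}\otimes e_\lambda$---yields $\theta_{kj}(e_\lambda b)=e_\lambda\theta_{ij}(b)$, and passing to the limit forces all $\theta_{ij}$ to coincide with a single c.c.p.\ map $\theta:=\theta_{i_0i_0}$. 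Since $\Theta$ fixes $\bbK_J(\A)$ pointwise, $\theta|_{\A}=\id$, so $\theta$ is a pseudo-expectation for $\A\subseteq\B$.

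With $\theta$ in hand, $\Theta(e_{ij}\otimes b)=e_{ij}\otimes\theta(b)=\theta_J(e_{ij}\otimes b)$, so $\Theta$ and $\theta_J$ agree on finitely-supported matrices; by density of these in $\bbK_J(\B)$ and norm-continuity, $\Theta=\theta_J$, giving surjectivity. Injectivity is immediate, as $\theta$ can be read back off any diagonal corner via $\theta_J(e_{i_0i_0}\otimes b)=e_{i_0i_0}\otimes\theta(b)$. For faithfulness, if $\theta$ is faithful and $\theta_J(x^*x)=0$ for $x=[b_{ij}]$, then the vanishing $(i,i)$-entry gives $\theta\big(\sum_k b_{ki}^*b_{ki}\big)=0$; since $b_{ki}^*b_{ki}\le\sum_k b_{ki}^*b_{ki}$ and $\theta$ is positive, each $\theta(b_{ki}^*b_{ki})=0$, whence $b_{ki}=0$ and $x=0$. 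Conversely, if $\theta_J$ is faithful, then $\theta(b^*b)=0$ yields $\theta_J\big((e_{i_0i_0}\otimes b)^*(e_{i_0i_0}\otimes b)\big)=e_{i_0i_0}\otimes\theta(b^*b)=0$, forcing $b=0$.

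The main obstacle---and the only place approximate unitality is genuinely used---is the corner-support step in the second paragraph. For a general $x\in I(\A)$ the net $e_\lambda x e_\lambda$ need not converge to $x$, so one cannot naively ``compress away'' the off-corner entries of $M$. The resolution is to apply the approximate unit inside the argument of $\Theta$ rather than to the putative output values: because $e_\lambda b e_\lambda\to b$ in $\B$, the norm-continuity of $\Theta$ transports the convergence to the output side, and corner-support of the limit is then forced by corner-support of the approximants. I expect the bookkeeping with matrix units, and the verification that the one-sided computation really does identify $\theta_{ij}$ uniformly across all indices, to be the fiddliest part, but no new idea should be needed beyond this.
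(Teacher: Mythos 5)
Your proposal is correct, and its overall skeleton matches the paper's: both proofs compress $\Theta$ to matrix corners, exploit the $\bbK_J(\A)$-bimodule property of $\Theta$ together with the common approximate unit $\{e_\lambda\}$, show $\Theta$ agrees with an amplification on elementary tensors $e_{ij}\otimes b$, and conclude by density; the faithfulness arguments are essentially identical. The genuine difference lies in how the two proofs handle the crucial convergence issue you flag at the end, namely that $\{e_\lambda\}$ is not an approximate unit for $I(\A)$, so limits like $\lim_\lambda e_\lambda x$ cannot be identified naively for $x \in I(\A)$. The paper fixes one index $k$, defines $\theta(b)=\Theta(E_{kk}\otimes b)_{kk}$, arrives at $\Theta(E_{ij}\otimes b) = E_{ij}\otimes \lim_\lambda\lim_\mu e_\lambda\theta(b)e_\mu$, and then identifies this double limit with $\theta(b)$ by invoking the theory of \emph{order limits} in the monotone complete $C^*$-algebra $I(\A)$ (the facts $\LIM_\lambda e_\lambda = 1_{I(\A)}$ and that norm limits agree with order limits when both exist, citing Sait\^{o}--Wright). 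Your proof avoids this machinery entirely: from the one-sided bimodule relation $\theta_{kj}(e_\lambda b)=e_\lambda\theta_{ij}(b)$, the left side converges in norm by continuity of $\theta_{kj}$, so the limit $\lim_\lambda e_\lambda\theta_{ij}(b)$ exists and is independent of $k$; taking $k=i$ identifies it as $\theta_{ij}(b)$, which simultaneously proves the needed convergence and collapses all the corner maps to a single $\theta$ (one does need the right-handed relation $\theta_{ik}(be_\lambda)=\theta_{ij}(b)e_\lambda$ as well, to identify maps across different second indices, but that is the routine bookkeeping you anticipate). This is a nice simplification: it makes the proof self-contained at the level of norm topology and the bimodule property, whereas the paper's argument imports nontrivial results about monotone complete $C^*$-algebras. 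What the paper's route buys in exchange is a shorter computation with a single distinguished corner, rather than a family of corner maps that must afterwards be shown to coincide.
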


\begin{proof}
As noted in the preceding paragraph, if $\theta:\B \to I(\A)$ is a pseudo-expectation for $\A \subseteq \B$, then $\theta_J:\bbK_J(\B) \to \bbK_J(I(\A))$ is a pseudo-expectation for $\bbK_J(\A) \subseteq \bbK_J(\B)$.\\

Conversely, suppose $\Theta:\bbK_J(\B) \to I(\bbK_J(\A))$ is a pseudo-expectation for $\bbK_J(\A) \subseteq \bbK_J(\B)$. Let $\{e_\lambda\}$ be an approximate unit for $\A$ which is also an approximate unit for $\B$. For $i, j \in J$, denote by $E_{ij} \in \bbK_J$ the rank-one operator that maps $\delta_j \in \ell^2(J)$ to $\delta_i \in \ell^2(J)$. Fix $k \in J$ and define a pseudo-expectation $\theta:\B \to I(\A)$ for $\A \subseteq \B$ by the formula
\[
    \theta(b) = \Theta(E_{kk} \otimes b)_{kk}, ~ b \in \B.
\]
For all $i, j \in J$ and $b \in \B$,
\begin{eqnarray*}
    \Theta(E_{ij} \otimes b)
    &=& \Theta(\lim_\lambda\lim_\mu (E_{ij} \otimes e_\lambda be_\mu))\\
    &=& \lim_\lambda\lim_\mu \Theta(E_{ij} \otimes e_\lambda be_\mu)\\
    &=& \lim_\lambda\lim_\mu \Theta((E_{ik} \otimes e_\lambda)(E_{kk} \otimes b)(E_{kj} \otimes e_\mu))\\
    &=& \lim_\lambda\lim_\mu (E_{ik} \otimes e_\lambda)\Theta(E_{kk} \otimes b)(E_{kj} \otimes e_\mu)\\
    &=& \lim_\lambda\lim_\mu (E_{ij} \otimes e_\lambda\Theta(E_{kk} \otimes b)_{kk}e_\mu)\\
    &=& \lim_\lambda\lim_\mu (E_{ij} \otimes e_\lambda\theta(b)e_\mu)\\
    &=& E_{ij} \otimes \lim_\lambda\lim_\mu e_\lambda\theta(b)e_\mu.
\end{eqnarray*}
Denoting \emph{order limits} in $I(\A)$ by $\LIM$ (see \cite[Definition 2.1.20]{SaitoWright2015}), we have that
\[
    \LIM_\lambda e_\lambda = 1_{I(\A)}.
\]
Using \cite[Lemma 2.1.19 and Corollary 2.1.23]{SaitoWright2015}, it follows that
\[
    \lim_\lambda\lim_\mu e_\lambda\theta(b)e_\mu
    = \lim_\lambda\LIM_\mu e_\lambda\theta(b)e_\mu = \lim_\lambda e_\lambda\theta(b)\\
    = \LIM_\lambda e_\lambda\theta(b) = \theta(b).
\]
Thus for all $i, j \in J$ and $b \in \B$,
\[
    \Theta(E_{ij} \otimes b) = E_{ij} \otimes \theta(b) = \theta_J(E_{ij} \otimes b),
\]
which implies $\Theta=\theta_J$.\\

Now suppose that $\theta_J$ is faithful. If $\theta(b^*b)=0$, then
\[
    \theta_J((E_{ii} \otimes b)^*(E_{ii} \otimes b)) = \theta_J(E_{ii} \otimes b^*b) = E_{ii} \otimes \theta(b^*b) = 0,
\]
which implies $E_{ii} \otimes b = 0$, which in turn implies $b=0$. Conversely, suppose $\theta$ is faithful. If
\[
    \theta_J\left(\begin{bmatrix} b_{ij} \end{bmatrix}^*\begin{bmatrix} b_{ij} \end{bmatrix}\right)
    = \theta_J\left(\begin{bmatrix} b_{ji}^* \end{bmatrix}\begin{bmatrix} b_{ij} \end{bmatrix}\right) 
    = \begin{bmatrix} \theta\left(\sum_{k \in J} b_{ki}^*b_{kj}\right) \end{bmatrix} = 0,
\]
then $\sum_{k \in J} b_{ki}^*b_{ki}=0$ for all $i \in J$, which implies $\begin{bmatrix} b_{ij} \end{bmatrix}=0$.
\end{proof}

Here is the main result of this section.

\begin{theorem} \label{twisted}
Let $(\A,G,\alpha,\sigma)$ be a discrete twisted $C^*$-dynamical system. If every pseudo-expectation for the $C^*$-inclusion $\A \subseteq \A \rtimes_{\alpha,r}^\sigma G$ is faithful, then there exists a unique pseudo-expectation.
\end{theorem}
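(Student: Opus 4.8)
The plan is to reduce the twisted case to the untwisted Theorem~\ref{crossed product} by means of the Packer--Raeburn stabilization isomorphism, shuttling pseudo-expectations back and forth along the amplification bijection of Theorem~\ref{amplification}. Concretely, I would take $J = G$ and run the chain
\[
    \A \subseteq \A \rtimes_{\alpha,r}^\sigma G
    \quad\longleftrightarrow\quad
    \bbK_G(\A) \subseteq \bbK_G(\A \rtimes_{\alpha,r}^\sigma G)
    \quad\cong\quad
    (\bbK_G \otimes \A) \subseteq (\bbK_G \otimes \A) \rtimes_{\beta,r} G,
\]
where the first correspondence is the amplification bijection and the second is the coefficient-respecting stabilization isomorphism $\bbK_G \otimes (\A \rtimes_{\alpha,r}^\sigma G) \cong (\bbK_G \otimes \A) \rtimes_{\beta,r} G$ of \cite[Corollary 3.7]{PackerRaeburn1989}. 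Crucially, the right-hand inclusion is an \emph{honest} (untwisted) crossed-product inclusion, so Theorem~\ref{crossed product} applies to the discrete $C^*$-dynamical system $(\bbK_G \otimes \A, G, \beta)$.

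To carry this out, I would first observe that $\A \subseteq \A \rtimes_{\alpha,r}^\sigma G$ is approximately unital, since any approximate unit $\{e_\lambda\}$ for $\A$ is also an approximate unit for the twisted reduced crossed product; thus Theorem~\ref{amplification} applies with $J = G$ and gives a faithfulness-preserving bijection $\theta \mapsto \theta_G$ between the pseudo-expectations for $\A \subseteq \A \rtimes_{\alpha,r}^\sigma G$ and those for $\bbK_G(\A) \subseteq \bbK_G(\A \rtimes_{\alpha,r}^\sigma G)$. By surjectivity of this bijection, the hypothesis that every pseudo-expectation downstairs is faithful upgrades to: every pseudo-expectation for $\bbK_G(\A) \subseteq \bbK_G(\A \rtimes_{\alpha,r}^\sigma G)$ is faithful. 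Identifying this inclusion with $\B \subseteq \B \rtimes_{\beta,r} G$ for $\B = \bbK_G \otimes \A$ via the stabilization isomorphism (which transports pseudo-expectations and preserves faithfulness), I conclude that every pseudo-expectation for $\B \subseteq \B \rtimes_{\beta,r} G$ is faithful. Theorem~\ref{crossed product} then yields a \emph{unique} pseudo-expectation for $\B \subseteq \B \rtimes_{\beta,r} G$, and tracing back through the isomorphism and the amplification bijection produces the desired unique pseudo-expectation for $\A \subseteq \A \rtimes_{\alpha,r}^\sigma G$.

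The logical skeleton is short, so the main obstacle is a compatibility check rather than a hard estimate: I must verify that the stabilization isomorphism carries the canonical copy of $\bbK_G \otimes \A$ on the left (arising from $\A \hookrightarrow \A \rtimes_{\alpha,r}^\sigma G$) onto the canonical copy of the coefficient algebra $\bbK_G \otimes \A$ on the right. This is precisely the feature to read off from the construction of $\beta$ in \cite{PackerRaeburn1989}, and it is the one point I would state explicitly. A secondary routine point is that transporting pseudo-expectations along a $*$-isomorphism of inclusions uses the functoriality of the injective envelope, so that the isomorphism of pairs induces a compatible identification of injective envelopes; this is immediate from $I(\bbK_G(\A)) = \bbM_G(I(\A))$ but should be noted for completeness.
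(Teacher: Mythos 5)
Your proposal is correct and follows essentially the same route as the paper's own proof: approximate unitality of $\A \subseteq \A \rtimes_{\alpha,r}^\sigma G$ to invoke the amplification bijection of Theorem \ref{amplification} with $J=G$, the Packer--Raeburn stabilization isomorphism (which, as both you and the paper note, identifies the copies of $\bbK_G \otimes \A$) to land in an untwisted crossed product, Theorem \ref{crossed product} for uniqueness there, and tracing back through the bijection. The compatibility point you flag is exactly the one the paper records, so nothing is missing.
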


\begin{proof}
Note that the $C^*$-inclusion $\A \subseteq \A \rtimes_{\alpha,r}^\sigma G$ is approximately unital, since any approximate unit for $\A$ is also an approximate unit for $\A \rtimes_{\alpha,r}^\sigma G$. If every pseudo-expectation for $\A \subseteq \A \rtimes_{\alpha,r}^\sigma G$ is faithful, then every pseudo-expectation for $\bbK_G \otimes \A \subseteq \bbK_G \otimes (\A \rtimes_{\alpha,r}^\sigma G)$ is faithful, by Theorem \ref{amplification}. By \cite[Corollary 3.7]{PackerRaeburn1989}, there exists an (honest) action $\beta$ of $G$ on $\bbK_G \otimes \A$ such that
\[
    \bbK_G \otimes (\A \rtimes_{\alpha,r}^\sigma G) \cong (\bbK_G \otimes \A) \rtimes_{\beta,r} G
\]
via an isomorphism which identifies the copies of $\bbK_G \otimes \A$. Thus every pseudo-expectation for $\bbK_G \otimes \A \subseteq (\bbK_G \otimes \A) \rtimes_{\beta,r} G$ is faithful, and so there is a unique pseudo-expectation by Theorem \ref{crossed product}. Then $\bbK_G \otimes \A \subseteq \bbK_G \otimes (\A \rtimes_{\alpha,r}^\sigma G)$ has a unique pseudo-expectation, which implies $\A \subseteq \A \rtimes_{\alpha,r}^\sigma G$ has a unique pseudo-expectation (again by Theorem \ref{amplification}).
\end{proof}

Another trick allows one to upgrade from inclusions of the form $\A \subseteq \A \rtimes_{\alpha,r}^\sigma G$ to inclusions of the form $\A \rtimes_{\alpha,r}^\sigma N \subseteq \A \rtimes_{\alpha,r}^\sigma G$, for a normal subgroup $N \lhd G$.

\begin{corollary} \label{normal subgroup}
Let $(\A,G,\alpha,\sigma)$ be a discrete twisted $C^*$-dynamical system and $N \lhd G$ be a normal subgroup. If every pseudo-expectation for the $C^*$-inclusion $\A \rtimes_{\alpha,r}^\sigma N \subseteq \A \rtimes_{\alpha,r}^\sigma G$ is faithful, then there exists a unique pseudo-expectation.
\end{corollary}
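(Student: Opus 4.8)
The plan is to realize the inclusion $\A \rtimes_{\alpha,r}^\sigma N \subseteq \A \rtimes_{\alpha,r}^\sigma G$ as an inclusion of the form $\B \subseteq \B \rtimes_{\beta,r}^\tau (G/N)$, where $\B := \A \rtimes_{\alpha,r}^\sigma N$ is itself a reduced twisted crossed product and $(\beta,\tau)$ is a twisted action of the quotient $G/N$ on $\B$, and then to invoke Theorem \ref{twisted} applied to the discrete twisted $C^*$-dynamical system $(\B, G/N, \beta, \tau)$. This is the ``iteration'' alluded to in the introduction: the subalgebra of the inclusion is again a twisted crossed product, and it is acted upon by the smaller group $G/N$. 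Since $G$ is discrete the quotient $G/N$ is discrete, so the hypotheses of Theorem \ref{twisted} are met.

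The key technical input is the decomposition theorem for twisted crossed products by a normal subgroup $N \lhd G$. Because $N$ is normal, conjugation by a group element $s \in G$ carries $\A \rtimes_{\alpha,r}^\sigma N$ into itself, and after choosing a set-theoretic section $G/N \to G$ this induces a twisted action $(\beta,\tau)$ of $G/N$ on $\B = \A \rtimes_{\alpha,r}^\sigma N$. The twist $\tau$ is forced upon us precisely because such a section need not be a homomorphism, so the construction genuinely lands in the twisted setting even when the original system is untwisted. The resulting statement is that there is a $*$-isomorphism
\[
    \A \rtimes_{\alpha,r}^\sigma G \cong \B \rtimes_{\beta,r}^\tau (G/N)
\]
which restricts to the identity on the common copy of $\B = \A \rtimes_{\alpha,r}^\sigma N$ (see \cite{PackerRaeburn1989}). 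I would first assemble this isomorphism in the reduced, twisted category, taking care that it identifies the two copies of $\B$.

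Granting the decomposition, the conclusion is immediate. Because the isomorphism fixes $\B$ pointwise, it identifies the $C^*$-inclusion $\A \rtimes_{\alpha,r}^\sigma N \subseteq \A \rtimes_{\alpha,r}^\sigma G$ with the $C^*$-inclusion $\B \subseteq \B \rtimes_{\beta,r}^\tau (G/N)$; in particular it induces a bijection between their pseudo-expectations, both valued in $I(\B)$, which preserves faithfulness. Thus the hypothesis that every pseudo-expectation for $\A \rtimes_{\alpha,r}^\sigma N \subseteq \A \rtimes_{\alpha,r}^\sigma G$ is faithful transfers to the statement that every pseudo-expectation for $\B \subseteq \B \rtimes_{\beta,r}^\tau (G/N)$ is faithful. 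Theorem \ref{twisted}, applied to $(\B, G/N, \beta, \tau)$, then yields a unique pseudo-expectation for the latter inclusion, and transferring back along the isomorphism gives a unique pseudo-expectation for the original inclusion.

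The main obstacle is establishing the decomposition isomorphism in the reduced, twisted category with the correct identification of subalgebras. One must verify that $(\beta,\tau)$ is well defined independently of the choice of section (up to exterior equivalence, which does not affect the isomorphism class of the crossed product), that the isomorphism is compatible with the reduced norms on both sides rather than merely the universal ones, and---crucially for our application---that it restricts to the identity on $\B$, so that pseudo-expectations correspond. Everything after that is bookkeeping, since pseudo-expectations depend only on the $C^*$-inclusion together with the injective envelope $I(\B)$, both of which are preserved.
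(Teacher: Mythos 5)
Your proposal is correct and takes essentially the same route as the paper: the paper's proof consists precisely of invoking the Packer--Raeburn decomposition \cite[Theorem 4.1]{PackerRaeburn1989} to obtain a twisted system $(\A \rtimes_{\alpha,r}^\sigma N, G/N, \beta, \tau)$ and an isomorphism $\A \rtimes_{\alpha,r}^\sigma G \cong (\A \rtimes_{\alpha,r}^\sigma N) \rtimes_{\beta,r}^\tau (G/N)$ identifying the copies of $\A \rtimes_{\alpha,r}^\sigma N$, and then applying Theorem \ref{twisted}. The verifications you list as the ``main obstacle'' (reduced norms, section-independence, fixing $\B$ pointwise) are exactly what the cited theorem supplies, so your argument matches the paper's.
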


\begin{proof}
By \cite[Theorem 4.1]{PackerRaeburn1989}, there exists a twisted $C^*$-dynamical system $(\A \rtimes_{\alpha,r}^\sigma N,G/N,\beta,\tau)$ such that
\[
    \A \rtimes_{\alpha,r}^\sigma G \cong (\A \rtimes_{\alpha,r}^\sigma N) \rtimes_{\beta,r}^\tau (G/N)
\]
via an isomorphism which identifies the copies of $\A \rtimes_{\alpha,r}^\sigma N$. Then the result follows immediately from Theorem \ref{twisted}.
\end{proof}

\subsection{Subalgebras of $B(\H)$}

In this section, which is unrelated to the previous two, we show that Question \ref{Q1} has an affirmative answer for unital $C^*$-inclusions $\A \subseteq B(\H)$.

\begin{theorem} \label{B(H)}
If every pseudo-expectation for the unital $C^*$-inclusion $\A \subseteq B(\H)$ is faithful, then there exists a unique pseudo-expectation. In that case, we can choose $I(\A) = \A''$ and the unique pseudo-expectation is a faithful normal conditional expectation $E:B(\H) \to \A''$.
\end{theorem}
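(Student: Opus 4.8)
The plan is to realize the injective envelope concretely inside $B(\H)$ and then show that the hypothesis forces the commutant $\A'$ to be an atomic abelian von Neumann algebra, after which the conclusion is largely formal. Since $B(\H)$ is injective, I would fix a minimal $\A$-projection $\Phi:B(\H) \to B(\H)$ in the sense of Hamana, so that $\A \subseteq I(\A) = \Phi(B(\H)) \subseteq B(\H)$ with $\Phi|_{\A} = \id$; regarded as a pseudo-expectation, $\Phi$ is faithful by hypothesis. Because $\Phi$ is an $\A$-bimodule map, one has $\Phi(\A') \subseteq \A' \cap I(\A)$, and the whole problem localizes to the structure of the von Neumann algebra $\A'$.

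The heart of the matter is to prove that the hypothesis forces $\A'$ to be atomic and abelian; equivalently, that $\A''$ is a multiplicity-free atomic type~I algebra $\bigoplus_i B(\H_i)$ with $\H = \bigoplus_i \H_i$, and moreover (invoking hereditary essentiality once more) that each compression $P_i\A P_i$ acts irreducibly on $\H_i$ and contains $\K(\H_i)$. I would argue by contraposition. If $\A'$ contained a nonabelian part, I would choose orthogonal Murray--von Neumann equivalent projections $p,q \in \A'$; if $\A'$ contained a diffuse abelian part, I would localize to a copy of $L^\infty[0,1]$ inside $\A'$. In either situation the aim is to manufacture a pseudo-expectation that fails to be faithful, modeled on the vector-state slice of Example~\ref{almost all faithful}: one feeds a \emph{non-faithful} normal state on the offending corner of $\A'$ into an $\A$-bimodular u.c.p.\ map valued in $I(\A)$ that annihilates a nonzero positive element. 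This contradicts the hypothesis and forces $\A'$ to be atomic abelian.

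Granting this, write $\{P_i\}$ for the minimal projections of $\A'$, so that $P_i \in \A' \cap \A'' = Z(\A'')$, $\H_i = P_i\H$, and $\A'' = \bigoplus_i B(\H_i)$, which is type~I and hence injective. Since each block contains the compacts, each inclusion $P_i\A P_i \subseteq B(\H_i)$ has $I(P_i\A P_i) = B(\H_i)$; standard properties of injective envelopes for disjoint irreducible representations then give $I(\A) = \A''$, and in particular $\A \subseteq \A'' = I(\A)$ is rigid. The block-diagonal compression
\[
    E:B(\H) \to \A'', \qquad E(x) = \sum_i P_i x P_i,
\]
is a normal conditional expectation, and it is faithful because $E(x^*x) = \sum_i (xP_i)^*(xP_i) = 0$ forces $xP_i = 0$ for all $i$, hence $x=0$. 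Thus $E$ is a faithful normal pseudo-expectation onto $\A'' = I(\A)$.

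For uniqueness, let $\theta:B(\H) \to I(\A) = \A''$ be any pseudo-expectation. By rigidity of $\A \subseteq \A''$, the restriction $\theta|_{\A''}$ is the identity, so $\theta$ is a conditional expectation onto $\A''$, in particular $\A''$-bimodular. For $i \neq j$ the off-diagonal blocks satisfy $\theta(P_i x P_j) = P_i\theta(x)P_j \in P_i\A''P_j = \{0\}$, while $\theta(P_i x P_i) = P_i x P_i$ since $P_i x P_i \in \A''$; summing gives $\theta = E$. The main obstacle is squarely the middle step: converting the failure of ``$\A'$ atomic abelian'' into an explicit non-faithful pseudo-expectation. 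The delicate point is that a nonabelian commutant can render $\A''$ non-injective (e.g.\ a non-injective factor), so the bad map cannot in general be produced as an honest conditional expectation onto $\A''$ and must instead be built directly into $I(\A)$; the diffuse-abelian (continuous masa) case requires analogous care.
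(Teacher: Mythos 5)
Your endgame is fine as far as it goes: once one knows $I(\A)=\A''=\bigoplus_i B(\H_i)$ with $\A'$ atomic abelian, the block compression is a faithful normal conditional expectation, and uniqueness follows from rigidity plus $\A''$-bimodularity (with the small repair that one should not ``sum inside $\theta$''---rather, $\theta(x)\in\A''$ is determined by its diagonal blocks $P_i\theta(x)P_i=\theta(P_ixP_i)=P_ixP_i$, since the off-diagonal blocks of an element of $\A''$ vanish). Even your compact-containment claim is provable downstream of atomicity, via hereditary essentiality (Theorem \ref{equivalence}) applied to the intermediate algebra $\A+\K(\H_i)$ and its ideal $\K(\H_i)$. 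But the proposal has a genuine gap exactly where you flag ``the main obstacle'': you never prove that $\A'$ is abelian and atomic, and that step \emph{is} the theorem. The abelian half is precisely \cite[Theorem 3.12]{PittsZarikian2015}---a substantive result, not a routine construction---and your contrapositive sketch for the nonabelian case founders on the difficulty you yourself identify: $\A''$ need not be injective there, so the desired non-faithful map must be built into an abstract $I(\A)$, and you supply no mechanism for doing so. (The diffuse abelian case, by contrast, really can be completed along your lines \emph{after} abelianness is known, because then $\A''$ is injective and honest conditional expectations onto $\A''$ are available to perturb; but you did not carry this out either.) As written, every hard assertion in the proof is deferred to an unproven structural claim.

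It is worth seeing how the paper sidesteps your obstacle: atomicity is never an input. It quotes \cite[Theorem 3.12]{PittsZarikian2015} to get $\A'$ abelian, hence injective, hence $\A''$ injective, so that $I(\A)$ may be realized inside $\A''$. The key original step is the reverse inclusion $\A''\subseteq I(\A)$, obtained with no structure theory at all: any pseudo-expectation $\theta$ restricts to the identity on $I(\A)$ by rigidity, so if $x=\sup_i x_i$ with $\{x_i\}\subseteq I(\A)_{sa}$ increasing, then $\theta(x)-x\geq 0$ while $\theta(\theta(x)-x)=0$, and faithfulness forces $x=\theta(x)\in I(\A)$; the same holds for decreasing nets, and Pedersen's up--down--up theorem then yields $(\A'')_{sa}\subseteq I(\A)_{sa}$. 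With $I(\A)=\A''$ in hand, pseudo-expectations for $\A\subseteq B(\H)$ are exactly conditional expectations for the $W^*$-inclusion $\A''\subseteq B(\H)$, and \cite[Proposition 7.5]{PittsZarikian2015} (injective subalgebra, all expectations faithful) delivers uniqueness, faithfulness, and normality at once; the atomic block structure you wanted to establish by hand emerges only a posteriori, as a consequence of normality. To salvage your proposal you would either need to supply complete proofs of both halves of your middle step, or reorganize the argument so that, as in the paper, faithfulness is converted into the identification $I(\A)=\A''$ before any decomposition of $\A'$ is attempted.
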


\begin{proof}
By \cite[Theorem 3.12]{PittsZarikian2015}, $\A'$ is abelian, therefore injective. It follows that $\A''$ is injective \cite[Theorem IV.2.2.7]{Blackadar2006}, which allows us to choose $I(\A) \subseteq \A''$. We will now show that the reverse inclusion holds as well. To that end, let $\theta:B(\H) \to I(\A)$ be a pseudo-expectation for $\A \subseteq B(\H)$ (faithful, by assumption). Then $\theta|_{I(\A)}:I(\A) \to I(\A)$ is a c.c.p. map such that $\theta|_{\A} = \id_{\A}$. By the rigidity of the injective envelope, $\theta|_{I(\A)} = \id_{I(\A)}$ (see \cite[Theorem 6.2.1]{EffrosRuan2000} or \cite[Lemma 4.2.4]{BlecherLeMerdy2004}). If $\{x_i\} \subseteq I(\A)_{sa}$ is an increasing net with supremum $x \in B(\H)_{sa}$, then $x_i = \theta(x_i) \leq \theta(x)$ for all $i$, which implies $x \leq \theta(x)$. Since $\theta(x) - x \geq 0$ and $\theta(\theta(x) - x) = 0$, we conclude that $x = \theta(x) \in I(\A)_{sa}$. Likewise if $\{x_i\} \subseteq I(\A)_{sa}$ is a decreasing net with infimum $x \in B(\H)_{sa}$, then $x \in I(\A)_{sa}$. By Pedersen's Up-Down-Up Theorem \cite[Theorem II.4.24]{Takesaki1979},
\[
    (\A'')_{sa} = (((\A_{sa})^m)_m)^m \subseteq I(\A)_{sa},
\]
which implies $\A'' \subseteq I(\A)$.\\

Since $I(\A) = \A''$, every conditional expectation for the inclusion $\A'' \subseteq B(\H)$ is a pseudo-expectation for the inclusion $\A \subseteq B(\H)$. Conversely, every pseudo-expectation for $\A \subseteq B(\H)$ is a conditional expectation for $\A'' \subseteq B(\H)$, by the rigidity of the injective envelope. Thus every conditional expectation for $\A'' \subseteq B(\H)$ is faithful. By \cite[Proposition 7.5]{PittsZarikian2015}, there exists a unique conditional expectation for $\A'' \subseteq B(\H)$, which is faithful and normal, and this is the unique pseudo-expectation for $\A \subseteq B(\H)$.
\end{proof}

\section{Negative Results}

\subsection{A Counterexample}

In this section, we show that in general Question \ref{Q1} has a negative answer. We thank Mikael R{\slasho}rdam for suggesting that we search for counterexamples among $C^*$-irreducible inclusions of the form $C_r^*(G) \subseteq C(X) \rtimes_{\alpha,r} G$.

\begin{theorem} \label{counterexample}
There exist $C^*$-irreducible inclusions of the form $C_r^*(G) \subseteq C(X) \rtimes_r G$ with multiple conditional expectations.
\end{theorem}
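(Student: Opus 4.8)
The plan is to realize the two required features---$C^*$-irreducibility and non-uniqueness of conditional expectations---through a single dynamical mechanism, namely a minimal action of a $C^*$-simple group that fails to be uniquely ergodic. First I would record the source of conditional expectations. If $\mu$ is a $G$-invariant Borel probability measure on $X$, then the state $\mu:C(X) \to \bbC$ is a $G$-equivariant unital completely positive (u.c.p.) map, where $\bbC$ carries the trivial $G$-action. By the standard functoriality of the reduced crossed product for equivariant u.c.p. maps (which one checks directly on the canonical dense $*$-subalgebra and then extends), $\mu$ induces a u.c.p. map
\[
    E_\mu:C(X) \rtimes_{\alpha,r} G \to \bbC \rtimes_{\alpha,r} G = C_r^*(G), \qquad E_\mu(fu_g)=\mu(f)u_g.
\]
Since $E_\mu(u_g)=u_g$, the map $E_\mu$ restricts to the identity on $C_r^*(G)$, so it is a conditional expectation. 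Because $E_\mu(f)=\mu(f)1$ for $f \in C(X)$, distinct invariant measures yield distinct conditional expectations. Thus it suffices to produce a $C^*$-irreducible inclusion $C_r^*(G) \subseteq C(X) \rtimes_{\alpha,r} G$ for which $G \curvearrowright X$ carries at least two invariant probability measures.

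Next I would address $C^*$-irreducibility. Since $C_r^*(G)$ is simple precisely when $G$ is $C^*$-simple, and since a nonzero ideal of an intermediate algebra $\C$ meeting the simple unital algebra $C_r^*(G)$ nontrivially must contain $1$ and hence equal $\C$, the inclusion is $C^*$-irreducible exactly when $G$ is $C^*$-simple and every intermediate $C^*$-algebra $C_r^*(G) \subseteq \C \subseteq C(X)\rtimes_{\alpha,r}G$ is simple. I would secure the latter by invoking R{\slasho}rdam's sufficient condition for such inclusions \cite{Rordam2023}, arranging it to hold by taking $G \curvearrowright X$ minimal and topologically free (so that $C(X)\rtimes_{\alpha,r}G$ itself is simple) and, in order to control the intermediate algebras, free of proper nontrivial $G$-factors. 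The conceptual tension to keep in mind is that the most familiar actions rendering such inclusions $C^*$-irreducible are Furstenberg boundary actions, which are strongly proximal and therefore admit \emph{no} invariant measure at all; the counterexample must instead live in the regime of minimal, measure-preserving, yet still sufficiently free dynamics.

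Finally, I would exhibit the dynamical system. I would take $G$ to be a $C^*$-simple group (for instance a nonabelian free group) equipped with a minimal, topologically free action on the Cantor set that is \emph{not} uniquely ergodic---for example one whose simplex of invariant probability measures is a nondegenerate segment, in the spirit of the realization theorems for minimal Cantor systems. Two distinct extreme points $\mu \neq \nu$ of this simplex then yield, via $E_\mu$ and $E_\nu$, two distinct conditional expectations $C(X)\rtimes_{\alpha,r}G \to C_r^*(G)$, completing the construction.

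The main obstacle is this last step: simultaneously securing $C^*$-irreducibility and the failure of unique ergodicity. The delicate point is to construct---or locate in the literature---a minimal, topologically free action of a $C^*$-simple group that has no proper factors obstructing simplicity of the intermediate algebras, and yet supports more than one invariant measure, since the standard boundary-action route to $C^*$-irreducibility is flatly incompatible with the existence of an invariant measure. Verifying that a concrete such action meets R{\slasho}rdam's hypotheses is where the real work lies.
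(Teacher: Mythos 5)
Your first step---producing conditional expectations from invariant measures via $E_\mu(fu_g)=\mu(f)u_g$---is correct and is exactly the mechanism the paper uses. But the proof has a genuine gap at precisely the point you flag as ``where the real work lies'': you never establish that a minimal, sufficiently free, non-uniquely ergodic action of a $C^*$-simple group on the Cantor set actually exists, and without that there is no counterexample at all. The tension you identify is real (for a non-amenable group such as $\bbF_2$, minimal actions typically admit \emph{no} invariant measure, boundary actions being the extreme case), and it is resolved in the paper by citing a theorem of Elek [Elek2021, Corollary 1.1]: \emph{every} countably infinite group---in particular $\bbF_2$, which is $C^*$-simple by Powers---admits a free minimal action on the Cantor set that carries invariant probability measures yet is not uniquely ergodic. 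This existence theorem is the essential missing ingredient; a plan that defers it is a plan, not a proof.

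Your proposed route to $C^*$-irreducibility is also off target. Requiring the action to be ``free of proper nontrivial $G$-factors'' would not control the intermediate algebras: an intermediate $C^*$-algebra $C_r^*(G) \subseteq \C \subseteq C(X)\rtimes_{\alpha,r}G$ need not be of the form $C(Y)\rtimes_{\alpha,r}G$ for a factor $Y$ of $X$, so conditions on factors say nothing about general intermediate subalgebras; moreover, imposing primeness of the system on top of non-unique ergodicity and $C^*$-simplicity of $G$ would make the existence problem far harder than necessary. The correct tool, used in the paper, is the theorem of Amrutam and Kalantar [AmrutamKalantar2020, Theorem 1.3]: if $G$ is $C^*$-simple and $G \curvearrowright X$ is merely \emph{minimal}, then every intermediate $C^*$-algebra between $C_r^*(G)$ and $C(X)\rtimes_{\alpha,r}G$ is simple---no topological freeness and no factor hypotheses are needed. (R{\slasho}rdam's paper records this inclusion as $C^*$-irreducible by appeal to the same result.) This also dissolves the conceptual tension you describe: since minimality alone suffices on the dynamical side, the presence of invariant measures is no obstruction whatsoever to $C^*$-irreducibility, and Elek's actions can be fed directly into the Amrutam--Kalantar theorem to finish the proof.
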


\begin{proof}
Let $G$ be a countably-infinite discrete group such that $C_r^*(G)$ is simple (one could take $G=\bbF_2$, by a result of Powers \cite[Theorem 2]{Powers1975}). By \cite[Corollary 1.1]{Elek2021}, there exists a free minimal action $G \curvearrowright X$ on the Cantor set that is not uniquely ergodic. Denote by $\alpha$ the induced action $G \curvearrowright C(X)$. By \cite[Theorem 1.3]{AmrutamKalantar2020}, the inclusion $C_r^*(G) \subseteq C(X) \rtimes_{\alpha,r} G$ is $C^*$-irreducible. But if $\mu$ is any $G$-invariant Borel probability measure on $X$ and $\phi_\mu$ is the $G$-invariant state on $C(X)$ defined by integration against $\mu$, then there exists a unique conditional expectation $E_\mu:C(X) \rtimes_{\alpha,r} G \to C_r^*(G)$ such that
\[
    E_\mu\left(\sum_g a_gg\right) = \sum_g \phi_\mu(a_g)g, ~ a \in C_c(G,C(X)).
\]
(See \cite[Exercise 4.1.4]{BrownOzawa2008}.) In particular,
\[
    E_\mu(a) = \phi_\mu(a) = \int_X a(x)d\mu(x), ~ a \in C(X).
\]
Since there are multiple $G$-invariant Borel probability measures on $X$, there are multiple conditional expectations for $C_r^*(G) \subseteq C(X) \rtimes_{\alpha,r} G$. 
\end{proof}

\subsection{The Main Question Reformulated}

A principal difference between inclusions of the form $\A \subseteq \A \rtimes_{\alpha,r}^\sigma G$, for which Question \ref{Q1} has an affirmative answer by Theorem \ref{twisted}, and inclusions of the form $C_r^*(G) \subseteq C(X) \rtimes_{\alpha,r} G$, for which Question \ref{Q1} can have a negative answer by Theorem \ref{counterexample}, is that the former inclusions are regular while the latter inclusions need not be. The $C^*$-inclusion $\A \subseteq \B$ is \emph{regular} if $\A$ has an abundance of \emph{normalizers} in $\B$, in the following sense:
\[
    \B = \o{\span}\{n \in \B: n\A n^* \cup n^*\A n \subseteq \A\}.
\]
This suggests that Question \ref{Q1} should be modified into the following:

\begin{question} \label{Q3}
If every pseudo-expectation for a \ul{regular} $C^*$-inclusion $\A \subseteq \B$ is faithful (equivalently, if $\A \subseteq \B$ is regular and hereditarily essential), must there be a unique pseudo-expectation?
\end{question}

In conclusion, we highlight results from the literature which show that Question \ref{Q3} has an affirmative answer provided $\A$ is either (I) abelian or (II) simple.
\begin{enumerate}
\item[(I)] Suppose $\A$ is abelian and $\A \subseteq \B$ is regular. Pitts proves that ``restricting the domain'' yields a bijection between the pseudo-expectations for $\A \subseteq \B$ and the pseudo-expectations for $\A \subseteq \A' \cap \B$ \cite[Proposition 6.10]{Pitts2021}. If $\A \subseteq \B$ is hereditarily essential, then (by definition) $\A \subseteq \A' \cap \B$ is essential and (by \cite[Theorem 3.12]{PittsZarikian2015}) $\A' \cap \B$ is abelian. It follows that $\A \subseteq \A' \cap \B$ has a unique pseudo-expectation, by \cite[Corollary 3.22]{PittsZarikian2015}. Thus $\A \subseteq \B$ has a unique pseudo-expectation, by the aforementioned result of Pitts.
\item[(II)] Suppose $\A$ is simple and $\A \subseteq \B$ is regular. Kwa\'{s}niewski proves that if $\A \subseteq \B$ is hereditarily essential, then it has a unique pseudo-expectation (which is actually a conditional expectation!).\footnote{To the author's knowledge, this result has not been published yet. But it has been presented in considerable detail in the Prague Noncommutative Geometry and Topology Seminar (May 13, 2022) \cite{KwasniewskiLecture2022} and the Workshop on Ideal Structure of $C^*$-Algebras from Dynamics and Groups (April 8-12, 2024) \cite{KwasniewskiLecture2024}.}
\end{enumerate}

\begin{remark}
Concerning (II) above, Kwa\'{s}niewski actually proves that for regular inclusions $\A \subseteq \B$ with $\A$ simple, being hereditarily essential has a \ul{host} of equivalent reformulations. One of those, as mentioned in (II) above, is having a unique pseudo-expectation which is faithful. Another is being \emph{aperiodic} and having a faithful pseudo-expectation (see Section \ref{crossed} above for a reminder about aperiodicity). So a more ambitious (but still reasonable) goal than answering Question \ref{Q3} affirmatively would be to prove the equivalence of the following three conditions for \ul{regular inclusions} $\A \subseteq \B$ \ul{with a faithful pseudo-expectation}:
\begin{enumerate}
\item[i.] $\A \subseteq \B$ is hereditarily essential.
\item[ii.] $\A \subseteq \B$ is aperiodic.
\item[iii.] $\A \subseteq \B$ has a unique pseudo-expectation.
\end{enumerate}
The implication (ii $\implies$ iii) and the equivalence (iii $\iff$ i) are known, by \cite[Theorem 3.6]{KwasniewskiMeyer2022} and \cite[Theorem ]{PittsZarikian2015}, respectively. So replacing Question \ref{Q3} by the equivalence of all three conditions amounts to trying to prove (i $\implies$ ii) instead of (i $\implies$ iii).
\end{remark}

\section*{Acknowledgements}

The author would like to express his gratitude to the anonymous referees for numerous helpful suggestions that improved the results, clarity, and attributional accuracy of this paper.


\begin{thebibliography}{99}

\bibitem{AmrutamKalantar2020}
Amrutam, Tattwamasi; Kalantar, Mehrdad \emph{On simplicity of intermediate $C^*$-algebras.} Ergodic Theory Dynam. Systems 40 (2020), no. 12, 3181-3187.

\bibitem{Berberian1972}
Berberian, Sterling K. \emph{Baer $*$-rings.} Die Grundlehren der mathematischen Wissenschaften, Band 195. Springer-Verlag, New York-Berlin, 1972. xiii+296 pp.

\bibitem{Blackadar2006}
Blackadar, Bruce \emph{Operator algebras. Theory of $C^*$-algebras and von Neumann algebras.} Encyclopaedia Math. Sci., 122. Oper. Alg. Non-commut. Geom., III. Springer-Verlag, Berlin, 2006. xx+517 pp.

\bibitem{BlecherLeMerdy2004}
Blecher, David P.; Le Merdy, Christian \emph{Operator algebras and their modules---an operator space approach.}
London Math. Soc. Monogr. (N.S.), 30. The Clarendon Press, Oxford University Press, Oxford, 2004. x+387 pp.

\bibitem{BrownOzawa2008}
Brown, Nathanial P.; Ozawa, Narutaka \emph{$C^*$-algebras and finite-dimensional approximations.} Grad. Stud. Math., 88. American Mathematical Society, Providence, RI, 2008. xvi+509 pp.

\bibitem{EffrosRuan2000}
Effros, Edward G.; Ruan, Zhong-Jin \emph{Operator spaces.} London Mathematical Society Monographs. New Series, 23. The Clarendon Press, Oxford University Press, New York, 2000. xvi+363 pp.

\bibitem{Elek2021}
Elek, G\'{a}bor \emph{Free minimal actions of countable groups with invariant probability measures.} Ergodic Theory Dynam. Systems 41 (2021), no. 5, 1369-1389.

\bibitem{GKPT2018}
Giordano, Thierry; Kerr, David; Phillips, N. Christopher; Toms, Andrew \emph{Crossed products of $C^*$-algebras, topological dynamics, and classification.} Lecture notes based on the course held at the Centre de Recerca Matemàtica (CRM) Barcelona, June 14–23, 2011. Edited by Francesc Perera. Adv. Courses Math. CRM Barcelona. Birkh\"{a}user/Springer, Cham, 2018. x+498 pp.

\bibitem{Hamana1979}
Hamana, Masamichi \emph{Injective envelopes of $C^*$-algebras.} J. Math. Soc. Japan 31 (1979), no. 1, 181-197.

\bibitem{Hamana1979opsys}
Hamana, Masamichi \emph{Injective envelopes of operator systems.} Publ. Res. Inst. Math. Sci. 15 (1979), no. 3, 773-785.

\bibitem{Hamana1982centre}
Hamana, Masamichi \emph{The centre of the regular monotone completion of a $C^*$-algebra.} J. London Math. Soc. (2) 26 (1982), no. 3, 522-530.

\bibitem{KwasniewskiLecture2022}
Kwa\'{s}niewski, Bartosz Kosma \emph{$C^*$-irreducible regular inclusions.} Prague Noncommutative Geometry and Topology Seminar (May 13, 2022). \texttt{{\tiny https://www.youtube.com/watch?v=6bLDzHQ1vFw}}

\bibitem{KwasniewskiLecture2024}
Kwa\'{s}niewski, Bartosz Kosma \emph{Aperiodicity, topological freeness and ideal structure for $C^*$-inclusions.} Workshop on Ideal Structure of $C^*$-Algebras from Dynamics and Groups (April 8-12, 2024). \texttt{{\tiny https://sites.google.com/site/kanglishomepage/home/workshop-ideal-structure-of-c-algebras-from-dynamics-and-groups}}

\bibitem{KwasniewskiMeyer2022}
Kwa\'{s}niewski, Bartosz Kosma; Meyer, Ralf \emph{Aperiodicity: the almost extension property and uniqueness of pseudo-expectations.} Int. Math. Res. Not. IMRN (2022), no. 18, 14384-14426.

\bibitem{PackerRaeburn1989}
Packer, Judith A.; Raeburn, Iain \emph{Twisted crossed products of $C^*$-algebras.} Math. Proc. Cambridge Philos. Soc. 106 (1989), no. 2, 293-311.

\bibitem{Pedersen1979}
Pedersen, Gert K. \emph{$C^*$-algebras and their automorphism groups.} London Mathematical Society Monographs, 14. Academic Press, Inc. [Harcourt Brace Jovanovich, Publishers], London-New York, 1979. ix+416 pp.

\bibitem{Pitts2017}
Pitts, David R. \emph{Structure for regular inclusions. I.} J. Operator Theory 78 (2017), no. 2, 357-416.

\bibitem{Pitts2021}
Pitts, David R. \emph{Structure for regular inclusions. II: Cartan envelopes, pseudo-expectations and twists.} J. Funct. Anal. 281 (2021), no. 1, Paper No. 108993, 66 pp.

\bibitem{PittsZarikian2015}
Pitts, David R.; Zarikian, Vrej \emph{Unique pseudo-expectations for $C^*$-inclusions.} Illinois J. Math. 59 (2015), no. 2, 449--483.

\bibitem{Powers1975}
Powers, Robert T. \emph{Simplicity of the $C^*$-algebra associated with the free group on two generators.} Duke Math. J. 42 (1975), 151-156.

\bibitem{Rordam2023}
R{\slasho}rdam, Mikael \emph{Irreducible inclusions of simple $C^*$-algebras.} Enseign. Math. 69 (2023), no. 3-4, 275-314.

\bibitem{SaitoWright2015}
Sait\^{o}, Kazuyuki; Wright, J. D. Maitland \emph{Monotone complete $C^*$-algebras and generic dynamics.}
Springer Monogr. Math. Springer, London, 2015. xi+257 pp.

\bibitem{Sakai1955}
Sakai, Sh\^{o}ichir\^{o} \emph{On the group isomorphism of unitary groups in $AW^*$-algebras.} T\^{o}hoku Math. J. (2) 7 (1955), 87-95.

\bibitem{Takesaki1979}
Takesaki, Masamichi \emph{Theory of operator algebras. I.} Springer-Verlag, New York-Heidelberg, 1979. vii+415 pp.

\bibitem{Zarikian2019a}
Zarikian, Vrej \emph{Unique expectations for discrete crossed products.} Ann. Funct. Anal. 10 (2019), 60-71.

\end{thebibliography}
\end{document}